\definecolor{labelkey}{rgb}{0,0,1}
\newenvironment{altproof}[1]
{\noindent
	{\em Proof of {#1}}.}
{\nopagebreak\mbox{}\hfill $\Box$\par\addvspace{0.5cm}}
\def\R {\mathbb{R}}
\newcommand{\vp}{\varphi}
\newcommand{\eps}{\varepsilon}
\def\supp{\mathrm{supp}}
\def\Z{\mathbb{Z}}
\def\d{\diamond}
\def\N{\mathbb{N}}
\def\R{\mathbb{R}}
\def\D{\mathcal{D}}
\newcommand{\cC}{{\mathcal C}}
\newcommand{\cD}{{\mathcal D}}
\newcommand{\cM}{{\mathcal M}}
\newcommand{\cP}{{\mathcal P}}
\newcommand{\Om}{\Omega}
\newcommand{\weakto}{\rightharpoonup}
\newcommand{\tu}{\widetilde{u}}
\def\pr{\right )}
\def\le{\left (}
\def\gg{^{\ast\ast}}
\def\d{\,d}
\def\D{\mathcal{D}^{2,2}(\R^N)}
\def\f{\varphi}
\def\supp{\mathrm{supp}\,}
\def\e{\varepsilon}
\newtheorem{proposition}{Proposition}[section]
\newtheorem{theorem}[proposition]{Theorem}
\newtheorem{lemma}[proposition]{Lemma}
\theoremstyle{definition}
\numberwithin{equation}{section}
\title[Biharmonic nonlinear scalar field equations]{Biharmonic nonlinear scalar field equations}
\author[J. Mederski]{Jaros\l aw Mederski}
\address[J. Mederski]{\newline\indent
	Institute of Mathematics,
	\newline\indent 
	Polish Academy of Sciences,
	\newline\indent 
	ul. \'Sniadeckich 8, 00-656 Warsaw, Poland
	\newline\indent 
	and
	\newline\indent 
	Department of Mathematics,
	\newline\indent 
	Karlsruhe Institute of Technology (KIT), 
	\newline\indent 
	D-76128 Karlsruhe, Germany
}
\email{\href{mailto:jmederski@impan.pl}{jmederski@impan.pl}}
\author[J. Siemianowski]{Jakub Siemianowski}
\address[J. Siemianowski]{\newline\indent
	Faculty of Mathematics and Computer Sciences,
	\newline\indent 
	Nicolaus Copernicus University in Toru\'{n} 
	\newline\indent 
	ul. Gagarina 11, 87-100 Toruń, Poland
	\newline\indent 
	and
	\newline\indent 
	Institute of Mathematics,
	\newline\indent 
	Polish Academy of Sciences,
	\newline\indent 
	ul. \'Sniadeckich 8, 00-656 Warsaw, Poland
	\newline\indent 
}
\email{\href{mailto:jsiem@mat.umk.pl}{jsiem@mat.umk.pl}}
\subjclass[2000]{35J91,35J20}
\keywords{Nonlinear scalar field equation, Brezis-Kato reqularity, biharmonic logarithmic Sobolev inequality, critical point theory, Pohozaev manifold}
\begin{document}

\begin{abstract}
We prove a Brezis-Kato-type regularity result for weak solutions to the biharmonic nonlinear equation
$$	\Delta^2 u = g(x,u)\qquad\text{in }\R^N$$
with a Carath\'eodory function $g:\mathbb{R}^N\times \mathbb{R}\to \mathbb{R}$, $N\geq 5$. The regularity results give rise to the existence of ground state solutions provided that $g$ has a general subcritical growth at infinity. We also conceive a new biharmonic logarithmic Sobolev inequality
$$
\int_{\mathbb{R}^N}|u|^2\log |u|\,dx\leq\frac{N}{8}\log \le C\int_{\mathbb{R}^N}|\Delta u|^2\,dx \pr, \quad\text{for } u \in H^2(\mathbb{R}^N), \; \int_{\mathbb{R}^N}u^2\d x = 1,
$$
for a constant $0<C< \Big(\frac{2}{\pi e N}\Big)^2$ and we characterize its minimizers.
\end{abstract}

\maketitle

\section{Introduction}

The study of higher-order differential elliptic operators is important, e.g. in nonlinear elasticity  \cite{Antman},  low Reynolds number hydrodynamics, in
structural engineering \cite{Selvadurai,Meleshko} as well as in nonlinear optics \cite{Fibich},
and has attracted attention from the mathematical point of view \cite{Gazzola}.  The methods developed for the second order problem, e.g. involving the Laplacian $-\Delta$, may no longer be available. For instance, it is the well-known that the bi-Laplacian $(-\Delta)^2=\Delta^2$ cannot be studied by means of some classical methods such as maximum principles, Polya-Szeg\H{o} inequalities, or even if $(\Delta u)^2\in L^1(\R^N)$, then it is possible that  $\Delta |u|\notin L^1_{loc}(\R^N)$. 

The first aim is of this work is to establish a regularity result in the spirit of Brezis-Kato \cite{BrezisKato} 
of weak solutions to 
\begin{equation}\label{eq}
	\Delta ^2 u = g(x,u),\quad x\in\Om,
\end{equation}
where $\Om\subset \R^N$ is a domain, $N\geq 2$ and $g:\Om\times\R\to\R$ is a Carath\'eodory function. If we suppose that  $\Om$ is bounded, then there is an extensive literature devoted to this problem. Namely, recall that if $g(x,u)=f(x)$, then Agmon, Douglis, Nirenberg \cite{Agmon_et_al} showed that for  $1<q<\infty$, $f\in L^q(\Om)$, there exists a unique strong solution $u\in W^{2,2}_0(\Om)\cap W^{4,q}(\Om)$ to \eqref{eq} provided that $\partial\Om\in C^4$ see also \cite[Corollary 2.21]{Gazzola} and references therein. Recently Mayboroda and Maz'ya \cite{Mayboroda_Mazja} showed $L^\infty$-estimates of $u$ (resp. $\nabla u$), where $f\in C_0^{\infty}(\Om)$, $\Om$ is an arbitrary bounded domain and  $N=4,5$
(resp. $N=2,3$). To the best of our knowledge, a variant of Brezis-Kato result \cite{BrezisKato} for \eqref{eq} is known only on a bounded domain in a particular case. Namely, Van der Vorst \cite{Van_der_Vorst} showed that, if $N\geq 5$, $g(x,u)=a(x)u$ and $a(x)\in L^{N/4}(\Om)$, then any weak solution $u\in W^{1,2}_0(\Om)\cap W^{2,2}(\Om)$ to \eqref{eq} satisfies $u\in L^q(\Om)$ for all $1\leq q<\infty$. This result is  suitable to show the regularity for the biharmonic equation with the nonlinearities of the special form $g(x,u)=f(u)u$ cf. \cite[Lemma B3]{Van_der_Vorst}.  In this paper we give a full answer to the problem on an arbitrary domain and for general $g$ with the adequate Brezis-Kato growth as we shall see below.

From now on we assume that $\Om\subset \R^N$ possibly unbounded domain and $N\geq 5$. Inspired by \cite{BrezisKato}, we impose on $g$ the 
following growth assumption:
\begin{equation}\label{eq:BK}
|g(x,s)|\leq a(x) \big(1+|s|\big),\quad\hbox{for }s\in\R\hbox{ and a.e. } x\in\Om,\hbox{ where } 0\leq a\in L^{N/4}_\text{loc}(\Om).
\end{equation}

The first main result reads as follows.
\begin{theorem}\label{th:BK}
	Let  $u\in W^{2,2}_\text{loc}(\Om)$ be a weak solution to \eqref{eq}, where $g$ satisfies \eqref{eq:BK}.
	Then $u\in C^{3,\alpha}_\text{loc}(\Om)\cap W^{4,q}_\text{loc}(\Om)$, for any $0<\alpha < 1$ and $1\leq q<\infty$.
\end{theorem}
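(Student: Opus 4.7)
The plan is a two-stage bootstrap. In the first, more delicate stage I upgrade the local integrability of $u$ from the $L^{2N/(N-4)}_\text{loc}$ given by Sobolev embedding of $W^{2,2}_\text{loc}$ up to $L^q_\text{loc}$ for every $q<\infty$, by adapting the Brezis--Kato test-function iteration to the fourth-order operator. In the second stage I plug the resulting integrability of $g(\cdot,u)$ into the local Calder\'on--Zygmund/Agmon--Douglis--Nirenberg $L^p$-theory for $\Delta^2$ to obtain $u\in W^{4,q}_\text{loc}$ for every $q<\infty$; the Morrey embedding $W^{4,q}\hookrightarrow C^{3,\alpha}_\text{loc}$ (valid once $q>N$) then delivers the H\"older conclusion.

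For the Brezis--Kato step, fix $B\Subset\Om$, a cutoff $\eta\in C_c^\infty(\Om)$ with $\eta\equiv 1$ on $B$, and for $K>0$, $\be\geq 0$ take the test function $\Phi:=\psi_K(u)\,\eta^{2\ga}$ with $\psi_K(s):=s\min(|s|,K)^{2\be}$ and $\ga$ chosen so that the integrations by parts are justified; observe that $\psi_K$ is globally Lipschitz, $\psi_K'\ge 0$, $s\psi_K''(s)\ge 0$, and $\psi_K(u)\in W^{2,2}_\text{loc}$. Testing \eqref{eq} against $\Phi$, integrating by parts twice, and expanding $\Delta(\psi_K(u))=\psi_K'(u)\Delta u+\psi_K''(u)|\nabla u|^2$ yields
$$
\int_\Om \psi_K'(u)(\Delta u)^2\eta^{2\ga}\d x
+\int_\Om \psi_K''(u)|\nabla u|^2\,\Delta u\,\eta^{2\ga}\d x
+R(\eta,u)
=\int_\Om g(x,u)\psi_K(u)\eta^{2\ga}\d x,
$$
with $R(\eta,u)$ collecting cutoff commutators involving $\nabla\eta^{2\ga}$ and $\Delta\eta^{2\ga}$, easily controlled by $\|u\|_{W^{2,2}(\supp\eta)}$. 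The first term on the left is coercive and, together with Sobolev, bounds $\bigl\|u\min(|u|,K)^\be\eta^\ga\bigr\|^2_{L^{2N/(N-4)}}$ up to a constant depending only on $\be$. The sign-indefinite middle term I tame by substituting $2|\nabla u|^2=\Delta(u^2)-2u\Delta u$: this produces the favourable nonpositive piece $-\int\psi_K''(u)u(\Delta u)^2\eta^{2\ga}\d x$ (since $s\psi_K''(s)\ge 0$) together with a remainder $\tfrac12\int\psi_K''(u)\Delta(u^2)\Delta u\,\eta^{2\ga}\d x$ that is absorbed via Young's inequality into the coercive part and into lower-order commutators. On the right-hand side, \eqref{eq:BK}, H\"older with conjugate exponents $N/4$ and $N/(N-4)$, and the Sobolev embedding $W^{2,2}\hookrightarrow L^{2N/(N-4)}$ give
$$
\int_\Om g(x,u)\psi_K(u)\eta^{2\ga}\d x\leq C\,\|a\|_{L^{N/4}(\supp\eta)}\,\bigl\|u\min(|u|,K)^\be\eta^\ga\bigr\|^2_{L^{2N/(N-4)}}+(\text{lower order}),
$$
and shrinking $\supp\eta$ until $C\|a\|_{L^{N/4}(\supp\eta)}<\tfrac12$ (possible by absolute continuity of the $L^{N/4}$-norm of $a$) lets me absorb the critical term. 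Passing $K\to\infty$ by monotone convergence and iterating the exponent $\be$ in the usual Moser/Brezis--Kato fashion then yields $u\in L^q_\text{loc}(\Om)$ for every $q<\infty$.

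For the second stage, once $u\in L^q_\text{loc}$ for every finite $q$, a Moser-type endpoint iteration (driving $\be\to\infty$) upgrades this to $u\in L^\infty_\text{loc}(\Om)$; then $|g(\cdot,u)|\le C(B)a$ on every $B\Subset\Om$, so the standard local $L^p$-theory for $\Delta^2$ (see e.g.\ \cite[Ch.~2]{Gazzola}, going back to \cite{Agmon_et_al}) delivers $u\in W^{4,q}_\text{loc}$ for every finite $q$. Choosing $q>N$ and invoking the Morrey embedding then yields $u\in C^{3,\alpha}_\text{loc}(\Om)$ for every $0<\alpha<1$.

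The main obstacle is the first stage. Unlike in the second-order Brezis--Kato argument, where the analogous test function produces a single nonnegative quadratic form in $\nabla u$, the fourth-order expansion also generates the indefinite cross-term $\int\psi_K''(u)|\nabla u|^2\Delta u\,\eta^{2\ga}\d x$, which is a genuinely new difficulty---reflecting the phenomenon noted in the introduction that $\Delta|u|$ can fail to be locally integrable even when $(\Delta u)^2$ is. Controlling this term with bounds that remain uniform in the truncation $K$, so that the monotone limit $K\to\infty$ can be taken and the induction on $\be$ closed, is the delicate technical point of the proof.
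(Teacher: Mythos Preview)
Your Stage~1 has a genuine gap, one the paper itself flags when it remarks that ``the Moser iteration technique does not seem to be applicable straightforwardly'' for $\Delta^2$. There are two concrete failures. First, the coercivity claim is unjustified: to bound $\|u\min(|u|,K)^\beta\eta^\gamma\|_{L^{2N/(N-4)}}$ via $W^{2,2}\hookrightarrow L^{2N/(N-4)}$ you must control $\|\Delta(u|u|^\beta\eta^\gamma)\|_{L^2}$, and since $\Delta(u|u|^\beta)=(\beta+1)|u|^\beta\Delta u+\beta(\beta+1)|u|^{\beta-2}u\,|\nabla u|^2$, this requires a bound on $\int|u|^{2\beta-2}|\nabla u|^4\eta^{2\gamma}$, which the term $\int\psi_K'(u)(\Delta u)^2\eta^{2\gamma}$ simply does not provide. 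Second, your treatment of the cross-term $B=\int\psi_K''(u)|\nabla u|^2\Delta u\,\eta^{2\gamma}$ is circular: substituting $|\nabla u|^2=\tfrac12\Delta(u^2)-u\Delta u$ gives $B=B_1+B_2$ with $B_1=-\int\psi_K''(u)u(\Delta u)^2\eta^{2\gamma}$ and $B_2=\tfrac12\int\psi_K''(u)\Delta(u^2)\Delta u\,\eta^{2\gamma}$, but $\Delta(u^2)=2u\Delta u+2|\nabla u|^2$ forces $B_2=-B_1+B$, so nothing is gained. And if instead you merge $B_1$ with the ``coercive'' term you get $\int[\psi_K'(u)-u\psi_K''(u)](\Delta u)^2\eta^{2\gamma}$; for $|s|<K$ one computes $\psi_K'(s)-s\psi_K''(s)=(2\beta+1)(1-2\beta)|s|^{2\beta}$, which is \emph{negative} for every $\beta>\tfrac12$, so coercivity is destroyed at the first step of the iteration. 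You are right that this cross-term is ``the delicate technical point'', but you have not resolved it.

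The paper avoids test functions entirely. After localizing $v=u\eta$ to a smooth bounded $V\Subset\Omega$, it rewrites the equation as $\Delta^2 v=\tilde a\,v+b\eta+K(u)$ with $\tilde a\in L^{N/4}(V)$ and $K(u)$ a commutator in $L^{p^\ast}(V)$, then splits $\tilde a\,v=q_\varepsilon v+\widehat f_\varepsilon$ with $\|q_\varepsilon\|_{L^{N/4}(V)}\le\varepsilon$ (a device from Van der Vorst). Using the Agmon--Douglis--Nirenberg solution operator $T_q=(\Delta^2)^{-1}:L^q(V)\to W^{4,q}(V)$ for the Navier problem, the equation becomes $v-A_{\varepsilon,q}v=h_{\varepsilon,q}$ with $A_{\varepsilon,q}v=T_q(q_\varepsilon v)$; H\"older plus the Sobolev embedding $W^{4,p^\ast}\hookrightarrow L^{Np/(N-5p)}$ give $\|A_{\varepsilon,p^\ast}\|\le C\varepsilon$ on $L^{Np/(N-5p)}(V)$, so for small $\varepsilon$ a Neumann series inverts $I-A_{\varepsilon,p^\ast}$ and upgrades $u\in W^{4,p}_{\mathrm{loc}}$ to $u\in L^{Np/(N-5p)}_{\mathrm{loc}}$. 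A finite explicit iteration in $p$ then reaches $L^q_{\mathrm{loc}}$ for all $q$, after which the $W^{4,q}_{\mathrm{loc}}$ and $C^{3,\alpha}_{\mathrm{loc}}$ conclusions follow as in your Stage~2 (without any need for $L^\infty_{\mathrm{loc}}$).
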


It is worth mentioning that in proof of Theorem \ref{th:BK} we can no longer apply classical techniques for Laplacian, e.g. due to Brezis and Kato  \cite{BrezisKato}, or Brezis and Lieb \cite[Theorem 2.3]{BrezisLieb}, since $\Delta |u|$ may not be well-defined for $u\in W^{2,2}_\text{loc}(\Om)$. Moreover, the Moser iteration technique does not seem to be applicable straightforwardly for $g$.

We shall present some consequences of Theorem \ref{th:BK} in $\Om=\R^N$.
Let us define $\mathcal{D}^{2,2}(\R^N)$ as a completion of the space $C^\infty_0(\R^N)$ with respect to the norm
$
\|u\|_{\cD^{2,2}} := \le \sum_{|\alpha|=2}\|\partial^\alpha u\|_{L^2(\R^N)}^2 \pr^\frac{1}{2}$. By the use of the Fourier transform and the Plancharel theorem we find a constant $c>0$ such that, for $u\in C^\infty_0(\R^N)$, 
\[
\frac{1}{c}\|u\|_{\D} \leq \|\Delta u\|_{L^2(\R^N)} \leq c \|u\|_{\D}.
\]
Therefore, the norms $\|u\| := \|\Delta u\|_{L^2(\R^N)}$ and $\|u\|_{\D}$ are equivalent on $\D$.
Moreover, $\D$ is a Hilbert space with the inner product
\[
\langle u,v \rangle := \int_{\R^N} \Delta u \Delta v \d x \qquad \hbox{for }u,\; v\in \D
\]
and $u\in\D$ is a {\em weak solution} to \eqref{eq} provided that
$$\langle u,v \rangle=\int_{\R^N}g(x,u)v\qquad\hbox{for any }v\in C_0^{\infty}(\R^N).$$

As usually expected, the following general Poho\v{z}aev-type result holds, cf. \cite{PucciSerrin}.

\begin{theorem}\label{th:Poho} Let $u\in\D$ be a weak solution to \eqref{eq}, where $g$ satisfies \eqref{eq:BK}.
	Then 
	\begin{equation}\label{Poh_identity}
		\int_{\R^N}|\Delta u|^2\, dx =\frac{2N}{N-4}\int_{\R^N}G(x,u)\,dx 
		+\frac{2}{N-4}\int_{\R^N} x\cdot \partial_x G(x,u)\,dx.
	\end{equation}
	provided that $G(x,u)$, $x\cdot \partial_x G(x,u)\in L^1(\R^N)$, where $G(x,s):=\int_{0}^s g(x,t)\,dt$, $x\in\R^N$, $t\in\R$.
\end{theorem}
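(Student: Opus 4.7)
The plan is to carry out the classical Pohozaev multiplier argument adapted to the biharmonic setting: test the equation against a truncation of the dilation generator $x\cdot\nabla u$, and then send the truncation to infinity. First I would invoke Theorem~\ref{th:BK} to upgrade the regularity of the solution, obtaining $u\in C^{3,\alpha}_{\text{loc}}(\R^N)\cap W^{4,q}_{\text{loc}}(\R^N)$ for all $1\leq q<\infty$, so that $\Delta^2 u=g(x,u)$ holds pointwise a.e.\ and the field $x\cdot\nabla u$ is classically defined and locally $W^{3,q}$. Then I fix $\eta\in C_0^\infty([0,\infty))$ with $\eta\equiv 1$ on $[0,1]$ and $\supp\eta\subset[0,2]$, set $\eta_R(x):=\eta(|x|/R)$, and observe that $v_R:=\eta_R(x)\,(x\cdot\nabla u)(x)$ lies in $H^2(\R^N)$ with compact support, so it is admissible as a test function in the weak formulation by density of $C_0^\infty$.

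Two algebraic identities drive the computation. The biharmonic commutation relation $\Delta(x\cdot\nabla u)=x\cdot\nabla\Delta u+2\Delta u$, combined with $\int \eta_R\Delta u\,(x\cdot\nabla\Delta u)\,dx=-\tfrac{1}{2}\int \div(\eta_R x)\,|\Delta u|^2\,dx$, produces on the left-hand side the bulk contribution $\tfrac{4-N}{2}\int_{\R^N}\eta_R|\Delta u|^2\,dx$. The chain rule $g(x,u)\nabla u=\nabla_x[G(x,u(x))]-(\partial_xG)(x,u)$, combined with the divergence theorem, produces on the right-hand side the bulk contribution $-N\int_{\R^N}\eta_R G(x,u)\,dx-\int_{\R^N}\eta_R\,x\cdot\partial_xG(x,u)\,dx$. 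Since $|\Delta u|^2$, $G(x,u)$ and $x\cdot\partial_xG(x,u)$ all lie in $L^1(\R^N)$, these three bulk contributions converge by dominated convergence as $R\to\infty$ to $\tfrac{4-N}{2}\int|\Delta u|^2\,dx$, $-N\int G(x,u)\,dx$ and $-\int x\cdot\partial_xG(x,u)\,dx$ respectively; matching them and dividing by $-\tfrac{N-4}{2}$ then recovers \eqref{Poh_identity}.

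The main obstacle is to show that every remainder in which at least one derivative lands on $\eta_R$ disappears as $R\to\infty$. Such remainders are supported on the annulus $A_R:=\{R\leq|x|\leq 2R\}$ and carry factors $|\nabla\eta_R|\leq C/R$ and $|\Delta\eta_R|\leq C/R^2$, balanced against $|x|\leq 2R$. The key a priori information is $\Delta u,\nabla^2 u\in L^2(\R^N)$, together with the Sobolev embeddings $\D\hookrightarrow L^{2N/(N-4)}(\R^N)$ and $\nabla u\in L^{2N/(N-2)}(\R^N)$. Using H\"older's inequality with exponents $(2,2N/(N-2),N)$ and $|A_R|^{1/N}\sim R$, typical bounds read
\[
\Big|\int_{\R^N}\Delta u\,\Delta\eta_R\,(x\cdot\nabla u)\,dx\Big|\;\leq\; C\,\|\Delta u\|_{L^2(A_R)}\,\|\nabla u\|_{L^{2N/(N-2)}(A_R)},
\]
\[
\Big|\int_{\R^N}\Delta u\,\nabla\eta_R\cdot\nabla(x\cdot\nabla u)\,dx\Big|\;\leq\; C\,\|\Delta u\|_{L^2(A_R)}\big(\|\nabla^2 u\|_{L^2(A_R)}+\|\nabla u\|_{L^{2N/(N-2)}(A_R)}\big),
\]
while the remaining cutoff terms are controlled by $C\|G(x,u)\|_{L^1(A_R)}$, $C\|x\cdot\partial_xG(x,u)\|_{L^1(A_R)}$ or $C\|\Delta u\|_{L^2(A_R)}^2$. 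Each such factor tends to zero as $R\to\infty$ by absolute continuity of the Lebesgue integral, so all remainders vanish in the limit, and \eqref{Poh_identity} follows.
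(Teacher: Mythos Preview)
Your proposal is correct and follows essentially the same route as the paper: invoke Theorem~\ref{th:BK} for regularity, multiply the equation by a radial cutoff times the dilation generator $x\cdot\nabla u$, unpack via the identity $\Delta(x\cdot\nabla u)=x\cdot\nabla\Delta u+2\Delta u$ and the chain rule for $G(x,u)$, and send the cutoff to infinity. The only cosmetic differences are that the paper uses $\varphi_n(x)=\varphi(|x|^2/n^2)$ and writes all terms as explicit divergences before integrating, and that you are more explicit than the paper about the H\"older estimates needed to kill the annular remainder terms (the paper simply appeals to ``Lebesgue's dominated convergence theorem and the properties of $\varphi_n$'').
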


We demonstrate that the Brezis-Kato result for biharmonic Laplacean as well as Theorem \ref{th:Poho} open the way to study  the existence of solutions and their regularity for \eqref{eq}. Indeed, let us assume that $g$ is independent of $x$ and the following condition holds:
\begin{itemize}
	\item[$(g0)$] there is a constant $c>0$ such that $|g(s)|\leq c\big(1+|s|^{2^{**}-1}\big)$ for $s\in\R$,
\end{itemize}
where $2\gg := \frac{2N}{N-4}$.
Then $a(x):=g(u(x))/(1+|u(x)|)\in L^{N/4}_{loc}(\R^N)$ for $u\in L^{2\gg}(\R^N)$ and in view of Theorem \ref{th:BK}, weak
solutions to the semilinear problem \eqref{eq} belong to
$C^{3,\alpha}_\text{loc}(\R^N)\cap W^{4,q}_\text{loc}(\R^N)$.
We introduce the energy functional
\begin{equation}\label{eq:action}
	J(u):=\frac12\int_{\R^N} |\Delta u|^2- \int_{\R^N} G(u)\, dx,
\end{equation}
where $G(s) = \int_0^s g(t)\d t$.
Next, we show the existence of weak solutions to \eqref{eq} under growth assumption at $0$ and at infinity inspired by a seminal paper due to Berestycki and Lions \cite{BerestyckiLions} (cf. \cite{Mederski,MederskiNon2020}). We assume that $g$ is continuous, $g(0)=0$ and $(g0)$ holds.
Let 
\[
\begin{aligned}	
	G_+(s):=
	\begin{cases}
		\int_0^s \max\{g(t),0\}\, dt  &\text{for } s\geq 0,\\
		\int_{s}^0 \max\{-g(t),0\}\, dt&\text{for }s<0,
	\end{cases}
\end{aligned}
\]
and $g_+(s)=G_+'(s)$. Suppose in addition, that
and the following conditions are satisfied:
\begin{itemize}
	\item[$(g1)$] $\lim_{s\to 0}G_+(s)/|s|^{2^{**}}=0$,
	\item[$(g2)$] there exists  $\xi_0>0$ such that $G(\xi_0)>0$,
	\item[$(g3)$] $\lim_{|s|\to \infty}G_+(s)/|s|^{2^{**}}=0$.
\end{itemize}
We introduce  the Poho\v{z}aev manifold
\begin{equation}\label{def:Poh}
\cM :=\Big\{u\in \cD^{2,2}(\R^N)\setminus\{0\}: \int_{\R^N}|\Delta u|^2=2^{**}\int_{\R^N}G(u)\, dx\Big\},
\end{equation}
and in view of Theorem \ref{th:Poho}, $\cM$ contains all nontrivial solutions.
The existence result reads as follows.
\begin{theorem}\label{thm:2}
Let $(g0)$--$(g3)$ be satisfied.
Then $\inf_{\cM} J>0$ and there is a ground state solution $u_0\in \D$ to \eqref{eq}, i.e. $u_0\in\cM$ solves \eqref{eq}
 and $J(u_0) = \inf_{\cM}J$. Moreover $u_0\in C^{3,\alpha}_\text{loc}(\R^N)\cap W^{4,q}_\text{loc}(\R^N)$, for any $0<\alpha < 1$ and $1\leq q<\infty$.
\end{theorem}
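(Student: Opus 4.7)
The strategy is to realize the ground state as a minimizer of $J$ on the Poho\v{z}aev manifold $\cM$, using scaling to parametrize the variational problem and Theorem~\ref{th:Poho} to close the constraint at the limit. For $u_t(x):=u(x/t)$ one has $\|u_t\|^2 = t^{N-4}\|u\|^2$ and $\int_{\R^N} G(u_t)\,dx = t^N\int_{\R^N} G(u)\,dx$, so whenever $u\in\cD^{2,2}(\R^N)$ with $\int G(u)>0$, the map $t\mapsto J(u_t)=\tfrac12 t^{N-4}\|u\|^2 - t^N\int G(u)$ has a unique strict maximum at some $t(u)>0$, at which $u_{t(u)}\in\cM$ and $J(u_{t(u)})=\tfrac{2}{N}\|u_{t(u)}\|^2$. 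Assumption $(g2)$ together with a standard smooth cut-off of the constant $\xi_0$ produces a test function with $\int G>0$, so $\cM\neq\emptyset$; and combining the identity $\|u\|^2=2^{**}\int G(u)$ on $\cM$ with $(g0)$ and the Sobolev embedding $\cD^{2,2}(\R^N)\hookrightarrow L^{2^{**}}(\R^N)$ yields a uniform lower bound on $\|u\|$, whence $c:=\inf_{\cM}J>0$.

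I then take a minimizing sequence $(u_n)\subset\cM$ with $J(u_n)\to c$; it is bounded in $\cD^{2,2}(\R^N)$. Because P\'olya--Szeg\H{o}-type symmetrization is unavailable for the bi-Laplacian, compactness must be recovered via translations. A Lions-type concentration argument rules out vanishing: if $\sup_{y\in\R^N}\int_{B(y,1)}|u_n|^q\to 0$ for some $q\in(2,2^{**})$, then $\int|u_n|^q\to 0$, and since $(g1),(g3)$ furnish, for every $\eps>0$, an estimate $G_+(s)\le\eps|s|^{2^{**}}+C_\eps|s|^q$, we would obtain $\int G(u_n)\to 0$, contradicting $u_n\in\cM$ and the lower bound on $\|u_n\|$. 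Thus, after translations $v_n:=u_n(\cdot+y_n)$ and a subsequence, $v_n\rightharpoonup u_0\neq 0$ in $\cD^{2,2}(\R^N)$ with a.e.\ convergence. To upgrade $(v_n)$ to a Palais--Smale sequence for $J|_{\cM}$ at level $c$, I apply Ekeland's variational principle on the $C^1$-manifold $\cM$ (whose smooth structure comes from the scaling fibration and the implicit function theorem) and verify that the corresponding Lagrange multiplier vanishes, using that $\partial_t J(u_t)|_{t=1}=0$ characterises $\cM$; this produces a sequence with $J'(v_n)\to 0$ in the dual of $\cD^{2,2}(\R^N)$.

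Passing to the limit in $\langle J'(v_n),\varphi\rangle$ for $\varphi\in C_0^\infty(\R^N)$ is then routine (the linear term by weak convergence; the nonlinear term by local compactness of $\cD^{2,2}\hookrightarrow L^p_{\mathrm{loc}}$ for $p<2^{**}$ and Vitali's theorem with $(g0)$), yielding $J'(u_0)=0$, so that $u_0$ is a nontrivial weak solution. Because $g$ is autonomous, $x\cdot\partial_x G\equiv 0$ in \eqref{Poh_identity}, and Theorem~\ref{th:Poho} gives $u_0\in\cM$; weak lower semicontinuity of $\|\cdot\|^2$ combined with $J=\tfrac{2}{N}\|\cdot\|^2$ on $\cM$ then yields $c\le J(u_0)\le\liminf J(v_n)=c$, so that $u_0$ realises the infimum. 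The regularity claim follows from Theorem~\ref{th:BK}, which applies to $u_0$ since $(g0)$ and $u_0\in L^{2^{**}}(\R^N)$ imply that $a(x):=|g(u_0(x))|/(1+|u_0(x)|)$ lies in $L^{N/4}_{\mathrm{loc}}(\R^N)$.

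The hardest step is the compactness of the minimizing sequence. Without rearrangement, nontriviality of the weak limit must be obtained purely by translations, and since $\cD^{2,2}(\R^N)$ does not embed into $L^2(\R^N)$, the rule-out of vanishing depends critically on the joint $(g1),(g3)$ estimate $G_+(s)=o(|s|^{2^{**}})$ at both endpoints. Moreover, the Poho\v{z}aev constraint cannot be transported to the weak limit directly from $v_n\in\cM$; it must be re-derived from the equation itself via Theorem~\ref{th:Poho}. Verifying that the extracted Palais--Smale sequence has vanishing Lagrange multiplier is a further delicate bookkeeping item, handled through the scaling structure.
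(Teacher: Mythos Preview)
Your outline has a genuine gap at its foundation: under $(g0)$--$(g3)$ the functional $J$ need not be well-defined, let alone $C^1$, on all of $\cD^{2,2}(\R^N)$. The hypotheses $(g1),(g3)$ control only $G_+$, so $G_+(s)\le c|s|^{2^{**}}$ and $\int G_+(u)<\infty$ for every $u\in\cD^{2,2}(\R^N)$; but nothing prevents $G_-(s)$ from behaving like $|s|^p$ with $p<2^{**}$ near $0$ (e.g.\ the model case $G(s)=s^2\log|s|$ has $G_-(s)\sim s^2|\log s|$), and since $\cD^{2,2}(\R^N)\hookrightarrow L^{2^{**}}(\R^N)$ only, $\int G_-(u)$ can be $+\infty$. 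Consequently $\cM$ is not the level set of a $C^1$ (or even continuous) functional on $\cD^{2,2}(\R^N)$, and your appeal to Ekeland's principle on ``the $C^1$-manifold $\cM$'', to Lagrange multipliers, and to $J'(v_n)\to 0$ in $(\cD^{2,2})^*$ is unjustified. The same issue infects your Lions step: you invoke $\int_{\R^N}|u_n|^q$ for $q\in(2,2^{**})$, but $u_n\in\cD^{2,2}(\R^N)$ does not guarantee $u_n\in L^q(\R^N)$.

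The paper confronts exactly this obstruction and proceeds differently. It truncates $g_-$ near the origin via a cutoff $\varphi_\eps$ so that $G_-^\eps(s)\le c(\eps)|s|^{2^{**}}$, obtaining well-defined approximating functionals $J_\eps$ and manifolds $\cM_\eps$. On each $\cM_\eps$ a minimizing sequence is handled not by Ekeland but by a direct argument (Lemma~\ref{lem:theta}): using only the scaling map $m_{\cP_\eps}$ and testing against $v\in C_0^\infty(\R^N)$, one shows the weak limit is already a critical point of $J_\eps$ at level $c_\eps$. Then a second limit $\eps_n\to 0$ is taken, with Fatou's lemma supplying $G_-(u_0)\in L^1(\R^N)$ a posteriori so that Theorem~\ref{th:Poho} applies and $u_0\in\cM$. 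The Lions-type lemma used (Lemma~\ref{lem:Lions}) is also tailored to $\cD^{2,2}(\R^N)$: it assumes only local $L^2$-vanishing on balls and concludes $\int\Psi(u_n)\to 0$ for any $\Psi$ with $\Psi(s)=o(|s|^{2^{**}})$ at $0$ and $\infty$, applied with $\Psi=G_+$; no global $L^q$ integrability for $q<2^{**}$ is required.
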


Theorem \ref{thm:2} enables us to consider
the following  nonlinearity
\begin{equation}\label{eq:logSob}
	G(s)=s^2\log |s|\quad\hbox{for }s\neq 0,\hbox{ and }G(0)=0
\end{equation}
which satisfies $(g0)$--$(g3)$. In view of Theorem \ref{thm:2} there is a ground state solution to \eqref{eq} and $$C_{N,log}:=2\gg\Big(\frac12-\frac{1}{2\gg}\Big)^{-\frac{4}{N-4}}(\inf_{\cM} J)^{\frac{4}{N-4}}.$$
We gain the following new {\em biharmonic logarithmic Sobolev inequality}.

\begin{theorem}\label{th:BihLog}
For any $u\in \cD^{2,2}(\R^N)$ such that $\int_{\R^N}|u|^2\,dx=1$,
there holds
\begin{equation}\label{logSob}
	\frac{N}{8}\log \le \le \frac{8e}{C_{N,log}(N-4)}\pr^{(N-4)/N}\int_{\R^N}|\Delta u|^2\,dx \pr\geq \int_{\R^N}|u|^2\log |u|\,dx
\end{equation}
and
$$\le \frac{8e}{C_{N,log}(N-4)}\pr^{(N-4)/N}<\Big(\frac{2}{\pi e N}\Big)^2.$$
Moreover the equality in \eqref{logSob} holds provided that $u=u_0/\|u_0\|_{L^2(\R^N)}$ and  $u_0$ is a ground state solution to \eqref{eq}. If the equality in \eqref{logSob} holds for $u$, then there are uniquely determined $\lambda>0$ and $r>0$ such that $u_0:=\lambda u(r\cdot)\in\cM$ and $u_0$ is a ground state solution to \eqref{eq}.
\end{theorem}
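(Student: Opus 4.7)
The plan is to exploit that the nonlinearity $G(s)=s^2\log|s|$ fits Theorem~\ref{thm:2}, and then compare the resulting ground-state variational problem with the classical logarithmic Sobolev inequality. Set $g(s):=G'(s)=2s\log|s|+s$. Then $g$ is continuous, $g(0)=0$, and conditions $(g0)$--$(g3)$ hold since $|s|^2|\log|s||/|s|^{2^{**}}\to 0$ both as $s\to 0$ and $|s|\to\infty$, and $G(e)=e^2>0$. Hence Theorem~\ref{thm:2} produces a ground state $u_0\in\mathcal{D}^{2,2}(\R^N)\cap C^{3,\alpha}_\text{loc}\cap W^{4,q}_\text{loc}$ with $u_0\in\mathcal{M}$ and $m:=J(u_0)=\inf_{\mathcal{M}}J>0$.

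For arbitrary $u\in\mathcal{D}^{2,2}$ with $\int u^2\,dx=1$ and $u\not\equiv 0$, write $A:=\|\Delta u\|_{L^2}^2$, $L:=\int u^2\log|u|\,dx$, and consider the two-parameter scaling $u_{\lambda,r}(x):=\lambda u(rx)$ with $\lambda,r>0$. A short calculation gives
\[
\|\Delta u_{\lambda,r}\|_{L^2}^2=\lambda^2 r^{4-N}A,\qquad \int G(u_{\lambda,r})\,dx=\lambda^2 r^{-N}(\log\lambda+L).
\]
I would then solve $\partial_\lambda J(u_{\lambda,r})=\partial_r J(u_{\lambda,r})=0$: the second equation is exactly the Pohozaev identity $u_{\lambda,r}\in\mathcal{M}$, and combined with the first one fixes uniquely $r_*^4 A=N/4$ and $\log\lambda_*=(N-4)/8-L$. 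Since $J(u_{\lambda,r})\to -\infty$ as $(\lambda,r)$ approaches the boundary of the positive quadrant, this critical point is the global maximum, with value $J(u_{\lambda_*,r_*})=\tfrac{1}{2}e^{(N-4)/4-2L}(4A/N)^{N/4}$.

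The inequality~\eqref{logSob} then follows by writing $J(u_{\lambda_*,r_*})\ge m$, taking logarithms, and inserting the definition of $C_{N,\log}$; this reduces to the exponent identity $\tfrac{4}{N}(N/4)^{4/N}(2N)^{(N-4)/N}=8^{(N-4)/N}$, which is immediate. The equality discussion is then automatic: plugging $u=u_0/\|u_0\|_{L^2}$ into the scaling produces equality because $u_0\in\mathcal{M}$ realizes $m$; conversely, equality forces $J(u_{\lambda_*,r_*})=m$, so $u_{\lambda_*,r_*}$ is a minimizer of $J|_\mathcal{M}$, i.e.\ a ground state, and the uniqueness of $(\lambda_*,r_*)$ from Step~2 identifies the pair.

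The numerical bound $K:=\bigl(\tfrac{8e}{C_{N,\log}(N-4)}\bigr)^{(N-4)/N}<\bigl(\tfrac{2}{\pi eN}\bigr)^2$ I would obtain by composing two sharp inequalities with disjoint extremals. The Weissler--Beckner $L^2$-log-Sobolev inequality gives $\int u^2\log|u|\le \tfrac{N}{4}\log\bigl(\tfrac{2}{\pi eN}\|\nabla u\|_{L^2}^2\bigr)$, with equality only for Gaussians, while Plancherel plus Cauchy--Schwarz in Fourier yields $\|\nabla u\|_{L^2}^2\le \|u\|_{L^2}\|\Delta u\|_{L^2}$, an inequality never saturated by any nonzero $u\in L^2$. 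Composing them produces \eqref{logSob} with constant $(2/(\pi eN))^2$; the sharp optimizer of the biharmonic version would then have to be simultaneously a Gaussian and a saturator of the Fourier Cauchy--Schwarz, which is impossible, so $K$ is \emph{strictly} smaller. The main obstacle in the whole plan is the bookkeeping in Step~3, where the various exponents coming from $2^{**}$, the definition of $C_{N,\log}$, and the rescaled energy must match exactly; everything else is either the application of Theorem~\ref{thm:2} or a classical comparison.
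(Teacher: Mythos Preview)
Your approach is essentially the same as the paper's: you use the scaling $u\mapsto \lambda u(r\cdot)$ to map into $\cM$, compare $J$ with $\inf_\cM J$, and then obtain the strict constant bound by composing the Weissler inequality with the Fourier Cauchy--Schwarz estimate $\|\nabla u\|_2^2\le \|u\|_2\|\Delta u\|_2$, exactly as the paper does in Lemma~\ref{lem:ineq}. The only cosmetic difference is that the paper performs the two scalings sequentially (first the $r$-dilation into $\cM$, then the multiplicative $e^\alpha$-scaling), whereas you optimize over $(\lambda,r)$ jointly.

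Two points deserve attention. First, your claim that $J(u_{\lambda,r})\to-\infty$ on the whole boundary of the positive quadrant is false: for instance $J(u_{\lambda,r})\to 0$ as $\lambda\to 0$ or $r\to\infty$, and if $\log\lambda+L<0$ then $J(u_{\lambda,r})\to+\infty$ as $r\to 0$. Fortunately you do not actually need this claim, since $\partial_r J=0$ already places $u_{\lambda_*,r_*}$ in $\cM$, whence $J(u_{\lambda_*,r_*})\ge m$ directly.

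Second, and more substantively, in the converse equality case you write ``$u_{\lambda_*,r_*}$ is a minimizer of $J|_\cM$, i.e.\ a ground state.'' A ground state solution is by definition a \emph{critical point} of $J$ lying in $\cM$ at level $\inf_\cM J$; the implication ``minimizer of $J|_\cM$ $\Rightarrow$ critical point of $J$'' is not automatic, because $\cM$ is not known to be a $C^1$-manifold and the usual Lagrange multiplier route is unavailable. The paper handles this (at the end of the proof of Theorem~\ref{th:BihLog}) by repeating the variational argument from Lemma~\ref{lem:theta}: one compares $J(u_0)$ with $J((u_0+tv)(r_t\cdot))\ge m$ for $v\in C_0^\infty$ and small $|t|$, differentiates in $t$, and obtains $J'(u_0)(v)=0$. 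You should either reproduce this step or cite it explicitly.
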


Recall that the classical logarithmic Sobolev inequality given in \cite{Weissler}:
\begin{equation}\label{eq:ineqLogSob}
	\frac{N}{4}\log\Big(\frac{2}{\pi e N}\int_{\R^N}|\nabla u|^2\,dx\Big)\geq  \int_{\R^N}|u|^2\log(|u|)\,dx,\quad\hbox{for }u\in H^1(\R^N), \int_{\R^N}|u|^2\,dx=1,
\end{equation}
which is equivalent to the Gross inequality \cite{Gross}, cf. \cite{LiebLoss}.
 Recall that the optimality of \eqref{eq:ineqLogSob} and the characterization of minimizers have been already proved by Carlen \cite{Carlen} in the context of the Gross inequality as well as by del Pino and Dolbeault \cite{DelPino,DelPinoJMPA} for the interpolated Gagliardo–Nirenberg inequalities and the $L^p$-Sobolev logarithmic inequality. A generalization of the optimal Gross inequality in Orlicz spaces is given by Adams \cite{Adams}. However, to the best of our knowledge, the  logarithmic Sobolev inequality for higher order operators have not been obtained in the literature so far and \eqref{eq:ineqLogSob} seems to be the first one for the biharmonic Laplacian. Note that, in contrast to \eqref{eq:ineqLogSob} and the Laplacian problem involving \eqref{eq:logSob}, we do not know ground state solutions to \eqref{eq} explicitly. Hence the exact computation of $C_{N,log}$ remains an open question.

The paper is organized as follows. In Section \ref{sec:regularity} we prove Theorem \ref{th:BK} and in Section \ref{sec:Poho} we obtain the  Poho\v{z}aev-type result. The main result of Section \ref{sec:Lions} is a general variant of Lion's lemma (Lemma \ref{lem:Lions}) in $\cD^{2,2}(\R^N)$, which is crucial for the proof of Theroem \ref{thm:2} given in Section \ref{sec:BL}. The last Section \ref{sec:BihLog} is devoted to the biharmonic logarithmic Sobolev inequality.

\section{Regularity theory and proof of Theorem \ref{th:BK}}\label{sec:regularity}

Let $N$, $k \in \N$ and $1\leq p <\infty$ with $N> kp$.
We define $\mathcal{D}^{k,p}(\R^N)$ as a completion of the space $C^\infty_0(\R^N)$ with respect to the norm
\[
\|u\|_{\cD^{k,p}} := \le \sum_{|\alpha|=k}\|D^\alpha u\|_{L^p(\R^N)}^p \pr^\frac{1}{p},\qquad u\in C^\infty_0(\R^N).
\]
Hence 
\begin{equation}\label{eq:0.1}
	\cD^{k,p}(\R^N) \subset \cD^{k-l,\frac{Np}{N-lp}}(\R^N),\qquad 0\leq l\leq k,
\end{equation}
and 
\begin{equation}\label{eq:0.2}
	\sum_{j=0}^k\sum_{|\alpha| = k -j}\|D^\alpha u \|_{L^\frac{Np}{N-jp}(\R^N)}\leq c \|u\|_{\cD^{k,p}},\qquad u \in \cD^{k,p}(\R^N).
\end{equation}

We fix an open set $\Omega\subset \R^N$.
We recall that by the standard approach based on  mollifiers and the Calderon--Zygmund $L^p$--estimates for higher order elliptic operators \cite[(2.6)]{Nirenberg} we have the following lemma.

\begin{lemma}\label{lem:1.1}
Let  $1<p<\infty$ and $k$ be a positive integer.
If $w \in L^p_{\text{loc}}(\Omega)$ and $\Delta^k w \in L^p_{\text{loc}}(\Omega)$, then $w \in W^{2k,p}_\text{loc}(\Omega)$.
\end{lemma}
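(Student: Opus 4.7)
The plan is to reduce the local problem to a global one by a cutoff and mollification, then invoke the Calder\'on--Zygmund estimate (2.6) from \cite{Nirenberg} for the polyharmonic operator, which says that $\sum_{|\alpha|=2k}\|D^\alpha v\|_{L^p(\R^N)}\leq C\|\Delta^k v\|_{L^p(\R^N)}$ for every $v\in C^\infty_0(\R^N)$. First, fix relatively compact open sets $\Omega'\subset\subset \Omega''\subset\subset \Omega$ and a cutoff $\eta\in C^\infty_0(\Omega'')$ with $\eta\equiv 1$ on $\Omega'$. For $\eps>0$ smaller than the distance from $\overline{\Omega''}$ to $\partial\Omega$, set $w_\eps:=w*\rho_\eps$; then $w_\eps$ is smooth on a neighborhood of $\overline{\Omega''}$, and crucially both $w_\eps\to w$ and $\Delta^k w_\eps=(\Delta^k w)*\rho_\eps\to \Delta^k w$ hold in $L^p(\Omega'')$ by the two hypotheses.

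Applying Nirenberg's estimate to the compactly supported function $\eta w_\eps\in C^\infty_0(\R^N)$ and expanding by the Leibniz rule gives
$$\Delta^k(\eta w_\eps)=\eta\,\Delta^k w_\eps+\sum_{j=0}^{2k-1}\sum_{|\beta|=j} c_\beta(\eta)\, D^\beta w_\eps,$$
with bounded coefficients $c_\beta(\eta)$ supported in $\Omega''$ built from derivatives of $\eta$. The principal term is controlled directly by $\|\Delta^k w\|_{L^p(\Omega'')}$, but the commutator sum contains intermediate derivatives of $w_\eps$ of order up to $2k-1$; taming these without circularity is the main obstacle.

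I would close the estimate by a ladder of nested cutoffs $\Omega'\subset\subset\Omega_1\subset\subset\cdots\subset\subset\Omega_{2k}\subset\subset\Omega''$ together with the standard Gagliardo--Nirenberg interpolation
$$\|D^j v\|_{L^p(\Omega_j)}\leq \delta\sum_{|\alpha|=2k}\|D^\alpha v\|_{L^p(\Omega_{j+1})}+C_\delta\|v\|_{L^p(\Omega_{j+1})},\qquad 0<j<2k,$$
valid for smooth $v$ and every $\delta\in(0,1)$. Choosing $\delta$ small enough absorbs the intermediate terms into the top-order bound produced by Nirenberg's inequality and yields an $\eps$-uniform estimate
$$\|w_\eps\|_{W^{2k,p}(\Omega')}\leq C\bigl(\|w\|_{L^p(\Omega'')}+\|\Delta^k w\|_{L^p(\Omega'')}\bigr).$$
Since $1<p<\infty$ makes $W^{2k,p}(\Omega')$ reflexive, a subsequence of $\{w_\eps\}$ converges weakly, and the $L^p$-convergence $w_\eps\to w$ identifies the weak limit as $w$, giving $w\in W^{2k,p}(\Omega')$. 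As $\Omega'$ was arbitrary this completes the proof. An equivalent alternative would be induction on $k$ anchored at the classical $L^p$-theory for $-\Delta$, which trades the ladder-of-cutoffs argument for repeated application of the $k=1$ case to $\Delta^{k-1}w$.
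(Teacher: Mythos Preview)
Your argument is exactly the ``standard approach based on mollifiers and the Calder\'on--Zygmund $L^p$-estimates'' that the paper invokes in a single sentence without further detail, and your outline is correct. Two small points worth tightening: (i) a bare finite ladder of cutoffs does not by itself close the absorption, because the residual term $(C\delta)^m\|D^{2k}(\eta_m w_\eps)\|_{L^p}$ is finite for each fixed $\eps$ (by smoothness of $w_\eps$) but not uniformly in $\eps$---the standard remedy is the iteration lemma (a geometric sequence of radii, using precisely that a~priori finiteness) or equivalently weighted interior norms, which turns your heuristic into a rigorous bound; (ii) the induction-on-$k$ alternative you mention needs an extra ingredient, since applying the $k=1$ case to $\Delta^{k-1}w$ presupposes $\Delta^{k-1}w\in L^p_{\mathrm{loc}}$, which is not part of the hypothesis.
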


Suppose that  $u\in W^{2,2}_\text{loc}(\Om)$ is a weak solution to \eqref{eq},
where $g$ satisfies \eqref{eq:BK}.
Clearly $u\in L^{2\gg}_\text{loc}(\Om)$.
Fix $U \subset\subset \Om$. Since $\frac{2N}{N+4}< \frac{N}{4}$ and $\frac{2N}{N+4}=2\gg\frac{N-4}{N+4}$, by the H\"older inequality
\[
\begin{aligned}
\int_{U}|g(x,u)|^\frac{2N}{N+4}\d x &\leq c\int_{U} |a(x)|^\frac{2N}{N+4}+|a(x)|^{\frac{N}{4}\frac{8}{N+4}}|u|^{2\gg\frac{N-4}{N+4}}\d x  < \infty,
\end{aligned}
\]
for some constant $c>0$.
Then, 
by the distributional equality
\[
\Delta^2 u = g(x,u)\in L^\frac{2N}{N+4}_\text{loc}(\Om),
\]
and Lemma \ref{lem:1.1}, we infer that $ u \in W^{4,\frac{2N}{N+4}}_\text{loc}(\Om)$. 

Now the crucial step is the following lemma.
\begin{lemma}\label{lem:2}
Let $p\geq \frac{2N}{N+4}$ and $u\in W^{4,p}_\text{loc}(\Om)$ be a weak solution to \eqref{eq}, where $g$ satisfies \eqref{eq:BK}.
Then 
\[
u\in 
\begin{cases}
L^{Np/N-5p}_\text{loc}(\Omega), &\text{if }5p < N,\\
L^q_\text{loc}(\Omega)\;\text{ for every }1\leq q < \infty ,&\text{if }5p \geq N.
\end{cases}
\]
\end{lemma}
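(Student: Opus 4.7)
The plan is to prove the lemma via a Moser-type iteration adapted to the biharmonic operator, using the critical biharmonic Sobolev embedding $\cD^{2,2}(\R^N)\hookrightarrow L^{2^{**}}(\R^N)$ as the mechanism for gaining integrability. First, the Sobolev embedding \eqref{eq:0.2} applied to $u\in W^{4,p}_\text{loc}(\Om)$ already yields $u\in L^{Np/(N-4p)}_\text{loc}(\Om)$ when $4p<N$, and $u\in L^q_\text{loc}(\Om)$ for every $q<\infty$ when $4p\geq N$ (which disposes of part of the second case). So assume $4p<N$ in what follows.

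Fix $U\subset\subset\Om$ and a cutoff $\eta\in C_c^\infty(\Om)$ with $\eta\equiv 1$ on $U$. For parameters $\beta>0$ and $M>0$ set $u_M := \min(\max(u,-M),M)$ and take the test function $\varphi := \eta^4 u_M|u_M|^{2\beta}$; it is admissible by smooth approximation since $u\in W^{4,p}_\text{loc}$ and $u_M$ is bounded. Testing the equation $\Delta^2 u = g(x,u)$ against $\varphi$, integrating by parts twice, and carefully grouping derivative terms, one arrives at a coercive estimate of the form
\[
\bigl\|\Delta(\eta^2 u_M|u_M|^\beta)\bigr\|_{L^2(\Om)}^2 \leq C(\beta)\int_\Om |g(x,u)|\,\eta^4 |u_M|^{2\beta+1}\,dx + R_\eta,
\]
where $R_\eta$ collects cross terms with derivatives of $\eta$ against $D^\alpha u$, $|\alpha|\leq 3$, all finite thanks to \eqref{eq:0.2} applied to $u\in W^{4,p}_\text{loc}$.

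The RHS is closed via a Brezis-Kato-type splitting. By absolute continuity of the integral, decompose $a=a_\epsilon+a_b$ on $U$ with $\|a_\epsilon\|_{L^{N/4}(U)}<\epsilon$ and $a_b\in L^\infty(U)$. Applying $|g(x,u)|\leq a(1+|u|)$, the $a_b$-contribution is bounded by $\|a_b\|_{L^\infty}\|u\|_{L^{2\beta+2}(U)}^{2\beta+2}$, which is finite as long as $2\beta+2\leq Np/(N-4p)$. The $a_\epsilon$-contribution, by H\"older with exponents $N/4$ and $N/(N-4)=2^{**}/2$ followed by the biharmonic Sobolev inequality, is controlled by $\epsilon C\,\|\Delta(\eta^2 u_M|u_M|^\beta)\|_{L^2}^2$ and hence absorbed into the LHS when $\epsilon$ is small. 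Monotone convergence as $M\to\infty$ then yields $u\in L^{(\beta+1)2^{**}}_\text{loc}(\Om)$. When $5p<N$, choosing $\beta$ so that $(\beta+1)2^{**}=Np/(N-5p)$, and verifying the admissibility constraint $2\beta+2\leq Np/(N-4p)$ directly (possibly after one or two initial Moser steps with smaller $\beta$), produces the claimed integrability; when $5p\geq N$, iterating the scheme with increasing $\beta$ gives $u\in L^q_\text{loc}(\Om)$ for every $q<\infty$.

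The main obstacle is the second step: the non-smoothness of the test function $\varphi$ requires a careful smooth approximation argument to justify the double integration by parts, and the expansion of $\Delta(\eta^4 u_M|u_M|^{2\beta})$ produces many cross terms that must be organized so that the genuinely coercive contribution $\|\Delta(\eta^2 u_M|u_M|^\beta)\|_{L^2}^2$ dominates and all remainders are absorbable. The truncation $u_M$ is essential: it circumvents the difficulty flagged in the introduction that $\Delta|u|$ need not even be locally integrable for $u\in W^{2,2}_\text{loc}$, while letting the Brezis-Kato-Moser scheme proceed within the available $W^{4,p}_\text{loc}$-regularity of $u$.
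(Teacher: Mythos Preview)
Your proposal has a genuine gap at the coercivity step, which you yourself flag as the ``main obstacle'' but do not actually resolve. The claimed inequality
\[
\bigl\|\Delta(\eta^2 u_M|u_M|^\beta)\bigr\|_{L^2}^2 \leq C(\beta)\int_\Om |g(x,u)|\,\eta^4 |u_M|^{2\beta+1}\,dx + R_\eta
\]
does not follow from any routine rearrangement of $\int \Delta u\,\Delta\varphi$. Expanding both sides (say for positive $u$, no cutoff, no truncation) one finds that $|\Delta(u^{\beta+1})|^2$ contains a term $(\beta+1)^2\beta^2 u^{2\beta-2}|\nabla u|^4$ with no counterpart in $\Delta u\cdot\Delta(u^{2\beta+1})$, and the cross terms $u^{2\beta-1}|\nabla u|^2\Delta u$ have the wrong sign to absorb it. Integrations by parts do not close this; one is left needing control of $\int |u|^{2\beta-2}|\nabla u|^4$, which the equation does not provide. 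This is precisely why the paper remarks in the introduction that ``the Moser iteration technique does not seem to be applicable straightforwardly''. Separately, your test function $\varphi=\eta^4 u_M|u_M|^{2\beta}$ is in general not in $W^{2,2}$: truncation of a $W^{2,2}$ function destroys second derivatives on $\{|u|=M\}$, and for $\beta<1/2$ the map $s\mapsto s|s|^{2\beta}$ is not $C^2$ at $0$, so the ``smooth approximation'' you invoke would have to manufacture the very coercivity that is missing.

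The paper takes a completely different, operator-theoretic route that sidesteps all of this. One localizes by setting $v=\eta u$, computes $\Delta^2 v=(\Delta^2 u)\eta+K(u)$ where the commutator $K(u)$ involves only derivatives of $u$ up to order three and hence lies in $L^{p^\ast}$, writes the equation as $v-A_{\eps,q}v=h_{\eps,q}$ with $A_{\eps,q}v=T_q(q_\eps v)$ and $T_q$ the Agmon--Douglis--Nirenberg solution operator for $\Delta^2$ with Navier boundary conditions on a smooth bounded $V$, and then uses the splitting $\tilde a v=q_\eps v+\widehat f_\eps$ with $\|q_\eps\|_{L^{N/4}}\le\eps$ (as in Van der Vorst) together with the $W^{4,q}$-estimate for $T_q$ and Sobolev embedding to make $\|A_{\eps,q}\|\le 1/2$, so that $(I-A_{\eps,q})^{-1}$ exists by Neumann series. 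The specific exponent $Np/(N-5p)$ arises because $K(u)\in L^{p^\ast}$ and $T_{p^\ast}:L^{p^\ast}\to W^{4,p^\ast}\hookrightarrow L^{Np/(N-5p)}$; it is not a natural stopping point for a Moser scheme, which, if it worked, would iterate to all $q<\infty$ in one lemma.
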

\begin{proof}
If $4p\geq N$, then the conclusion follows immediately by the Sobolev embedding $W^{4,p}_\text{loc}(\Omega)\subset L^q_\text{loc}(\Omega)$, $q\geq 1$.
Thus, we can clearly assume that $4p< N$.
Let us define 
\begin{equation*}
\begin{aligned}
	\tilde a(x) &:= 
	\begin{cases}
		\frac{g(x,u(x))}{u(x)}\chi_{\left\{ x \in \Omega\mid |u(x)|> 1 \right\}}(x), &\text{for }u(x)\neq 0,\\
		0&\text{for }u(x)=0,
	\end{cases}\\
	b(x) &:= g(x,u(x)) \chi_{\left\{ x\in \Omega\mid |u(x)|\leq 1\right\}}(x),
\end{aligned}
\end{equation*}
and observe that $g(x,u)=\tilde a(x)u+b(x)$ and $\tilde a,b\in L^{N/4}_\text{loc}(\Omega)$.

Let $U$ be an arbitrary open bounded subset of $\Omega$ such that $U\subset \overline{U}\subset \Omega$.
We find an open bounded $V$ with $C^\infty$-smooth boundary such that $\overline{U}\subset V \subset \overline{ V} \subset \Omega$.
Indeed, let $\xi \in C^\infty_0(\Omega)$ be a smooth cut-off function such that $\xi \equiv 1 $ on $U$ and $0\leq \xi \leq 1$.
By Sard's theorem, there is a regular value $c\in (0,1)$.
Then $V = \xi^{-1}((c,1])$ is an open bounded subset with the smooth boundary $\partial V = \xi^{-1}(\{c\})$ satisfying  $\overline {U} \subset V \subset \overline{V}\subset \Omega$.

Now take $\eta \in C^\infty_0(V)$ such that $\eta = 1$ on $U$ and $0\leq \eta \leq 1$.
We restrict our problem to $V$.
By the assumption $u\in W^{4,p}(V)$ is a distributional solution of
\begin{equation}\label{eq:1.11}
\Delta^2 u = \tilde a(x)u+b(x)\qquad\text{in } V
\end{equation}
and $\tilde a,b\in L^{N/4}(V)$.
We define 
\[
v := u\eta.
\]
Certainly, we have $v\in W^{4,p}(V)\subset H^2(V)$  and $v\in H^1_0(V)$, since $\supp \eta \subset \subset V$.
Standard calculations yield
\begin{equation}\label{eq:1.12}
\begin{aligned}
\Delta^2 v &= (\Delta^2 u)\eta + \underbrace{4\nabla \Delta u \cdot \nabla \eta + 4\sum_{i=1^N}\nabla u_{x_i}\cdot \nabla \eta_{x_i} + 2 \Delta u \Delta \eta + 4 \nabla u \cdot \nabla \Delta \eta + u \Delta^2 \eta}_{=: K(u)}\\
&=: (\Delta^2 u ) \eta + K(u).
\end{aligned}
\end{equation}
Observe that $u\in W^{4,p}(V)\subset W^{3,p^\ast}(V)$, $p^\ast = \frac{Np}{N-p}$ and $\eta \in C^\infty_0(V)$ imply that 
\begin{equation}\label{eq:1.20}
\|K(u)\|_{L^{p^\ast}(V)} \leq c\|u\|_{W^{3,p^\ast}(V)}\|\eta\|_{W^{4,\infty}(V)}\leq c(\eta)\|u\|_{W^{4,p}(V)},
\end{equation}
for some constant $c(\eta)>0$.

In view of \cite[Lemma B.2]{Van_der_Vorst}, for every $\e>0$ there are  $q_\e \in L^{N/4}(V)$ and $\widehat{f}_\e \in L^\infty(V)$ such that 
\begin{equation}\label{eq:1.9}
\tilde a(x) v = q_\e (x)v + \widehat{f}_\e,
\end{equation}
and
\begin{equation}\label{eq:1.10}
\qquad \|q_\e\|_{L^{N/4}(V)}\leq \e.
\end{equation}
By \eqref{eq:1.12}, \eqref{eq:1.11} and \eqref{eq:1.9} we get
\begin{equation}\label{eq:1.13}
\begin{aligned}
\Delta^2 v &= (\Delta^2 u)\eta + K(u) \\
&= \tilde a(x)v + b(x)\eta + K(u) \\
&= q_\e(x)v + f_\e + K(u),
\end{aligned}
\end{equation}
where 
\begin{equation}\label{eq:1.15}
f_\e := \widehat{f}_\e + b(x)\eta \in L^{\frac{N}{4}} (V).
\end{equation}

We  recall some needed regularity results from \cite{Agmon_et_al} (see also \cite[Thm 2.20]{Gazzola}), for all  $1<q<\infty$, $\bar g\in L^q(V)$, there exists a unique strong solution $u \in W^{4,q}(V)$ to the problem
\[
\begin{cases}
(-\Delta)^2 u = \bar g &\text{in } V,\\
u = \Delta u = 0 &\text{on }\partial V.
\end{cases}
\]
satisfying 
\begin{equation*}
\|u\|_{W^{4,q}(V)}\leq c_q \|\bar g\|_{L^q(V)},
\end{equation*}
where $c_q>0$ depends only on $N$, $q$ and $V$.
Denote by $T_q$ the linear operator $g\mapsto u$ considered as an operator from $L^q(V)$ to  $W^{4,q}(V)$ and rewrite the above inequality as
\begin{equation}\label{eq:1.18}
\|T_q \bar g\|_{W^{4,q}(V)}\leq c_q \|\bar g\|_{L^q(V)}.
\end{equation}
Obviously, $T_q$ is the $L^q$-inverse of the bilaplacian $(-\Delta)^{2}$ considered with the Navier boundary conditions $u = \Delta u = 0$ on $\partial V$.

Now we can rephrase \eqref{eq:1.13} in the language of operators
\begin{equation}\label{eq:1.14}
v - A_{\e,q} v = h_{\e,q}, 
\end{equation}
where $A_{\e,q}v  := T_q(q_\e v)$ and $h_{\e,q} := T_q (f_\e + K(u))$.

We consider two cases separately.

\begin{center}
\textbf{Case I: $5p <  N$.}
\end{center}

In what follows we take $q= p^\ast$.
By the Sobolev embedding $W^{4,p^\ast}(V)\subset L^\frac{Np}{N-5p}(V)$, \eqref{eq:1.18}, \eqref{eq:1.15} and \eqref{eq:1.20}, we have
\begin{equation}\label{eq:1.19}
\begin{aligned}
\|h_{\e,p^\ast}\|_{L^\frac{Np}{N-5p}(V)} &\leq c_\text{Sobolev} \|T_{p^\ast}(f_\e + K(u))\|_{W^{4,p^\ast}(V)}\\
&\leq c_\text{Sobolev}c_{p^\ast}\|f_\e + K(u)\|_{L^{p^\ast}(V)}\\
&\leq c\le \|f_\e\|_{L^{\frac{N}{4}}(V)} + \|K(u)\|_{L^{p^\ast}(V)}\pr\\
&\leq c \le \|f_\e\|_{L^{\frac{N}{4}}(V)} + c(\eta)\|u\|_{W^{4,p}(V)}\pr,
\end{aligned}
\end{equation}
where  $c>0$ is some constant.
We estimate the norm of the linear operator $A_{\e,p^\ast}:L^\frac{Np}{N-5p}(V)\to L^\frac{Np}{N-5p}(V)$ applying the Sobolev embedding $W^{4,p^\ast}(V)\subset L^\frac{Np}{N-5p}(V)$ and \eqref{eq:1.18}
\begin{equation}\label{eq:1.22}
\|A_{\e,p^\ast}v\|_{L^\frac{Np}{N-5p}(V)} \leq c_\text{Sobolev} \|T_{p^\ast} (q_\e v)\|_{W^{4,{p^\ast}}(V)}\leq c_\text{Sobolev}c_{p^\ast}\|q_\e v\|_{L^{p^\ast}(V)}.
\end{equation}
We use the H\"{o}lder inequality with the exponents
\[
\frac{1}{\frac{N}{4}}+\frac{1}{\frac{Np}{N-5p}}=\frac{1}{p^\ast}
\]
to obtain
\begin{equation}\label{eq:1.21}
\|q_\e v\|_{L^{p^\ast}(V)}\leq \|q_\e\|_{L^{N/4}(V)}\|v\|_{L^\frac{Np}{N-5p}(V)}.
\end{equation}
In view of \eqref{eq:1.22}, \eqref{eq:1.21} and \eqref{eq:1.10} we gain
\[
\|A_{\e,{p^\ast}}v\|_{L^\frac{Np}{N-5p}(V)} \leq c_\text{Sobolev}c_{p^\ast}\e\|v\|_{L^\frac{Np}{N-5p}(V)}.
\]
We choose $\e := \le 2c_\text{Sobolev}c_{p^\ast}\pr ^{-1}$ to deduce
\begin{equation}\label{eq:1.23}
\|A_{\e, {p^\ast}}\|_{L^\frac{Np}{N-5p}\to L^\frac{Np}{N-5p}}\leq \frac{1}{2}.
\end{equation}
Then $(I-A_{\e,{p^\ast}})$ is invertible on the space $L^\frac{Np}{N-5p}(V)$ with the norm bounded by $2$ and by \eqref{eq:1.14}
\begin{equation}\label{eq:1.24}
v = (I- A_{\e,{p^\ast}})^{-1}h_{\e,{p^\ast}},
\end{equation}
so by the above and by \eqref{eq:1.19}
\[
\begin{aligned}
\|v\|_{L^{\frac{Np}{N-5p}}(V)} &\leq \left\|\le I-A_{\e,{p^\ast}}\pr ^{-1} \right \|_{L^\frac{Np}{N-5p}\to L^\frac{Np}{N-5p}}\|h_{\e,{p^\ast}}\|_{L^\frac{Np}{N-5p}(V)}\\
&\leq 2 c\le\|f_\e\|_{L^\infty(V)} + c(\eta) \|u\|_{W^{4,p}(V)}\pr  < \infty.
\end{aligned}
\]
Hence $v\in L^\frac{Np}{N-5p}(V)$ and, since $u = v $ on $U\subset \Omega$ and $U$ is arbitrary, we finally get $u\in L^\frac{Np}{N-5p}_\text{loc}(\Omega)$ as claimed.
This finishes the proof of Case I.

\begin{center}
\textbf{Case II: $5p \geq N$.}
\end{center}

We proceed similarly as in Case I.
Fix any $\frac{Np}{N-4p}\leq q<\infty $ and define $r:=  \frac{Nq}{N+4q}$.
Then we have $1< r < \frac{N}{4} \leq \frac{Np}{N-p}$.
We employ the Sobolev embedding $W^{4,r}(V)\subset L^q(V)$, \eqref{eq:1.18}, \eqref{eq:1.15} and \eqref{eq:1.20} to estimate
\begin{equation}\label{eq:1.25}
\begin{aligned}
\|h_{\e,r}\|_{L^q(V)}&\leq c_\text{Sobolev}\|T_r(f_\e + K(u))\|_{W^{4,r}(V)}\\
&\leq c_\text{Sobolev} c_r\|f_\e + K(u)\|_{L^r(V)}\\
&\leq c\le  \|f_\e\|_{L^{\frac{N}{4}}(V)}  + \|K(u)\|_{L^{p^\ast}}\pr \\
&\leq c \le \|f_\e\|_{L^{\frac{N}{4}}(V)} + c(\eta)\|u\|_{W^{4,p}(V)}\pr,
\end{aligned}
\end{equation}
for some constant $c>0$.
We bound the norm of $A_{\e,r}:L^q(V) \to L^q(V)$ by exploiting the Sobolev embedding $W^{4,r}(V)\subset L^q(V)$ and \eqref{eq:1.18}
\begin{equation}\label{eq:1.26}
\begin{aligned}
\|A_{\e,r}\|_{L^q(V)}\leq c_\text{Sobolev}\|T_r(q_\e v)\|_{W^{4,r}(V)}\leq c_\text{Sobolev} c_r\|q_\e v\|_{L^r(V)}.
\end{aligned}
\end{equation}
We use H\"{o}lder's inequality with exponents
\[
\frac{1}{\frac{N}{4}}+\underbrace{\frac{1}{\frac{Nr}{N-4r}}}_{=\frac{1}{q}} = \frac{1}{r}
\]
 and \eqref{eq:1.10} to obtain
\begin{equation}\label{eq:1.27}
\|q_\e v\|_{L^r(V)}\leq \|q_\e \|_{L^\frac{N}{4}(V)}\|v\|_{L^q(V)}\leq \e \|v\|_{L^q(V)}.
\end{equation}
We choose $\e = \le 2 c_\text{Sobolev}c_r\pr ^{-1}$ and from  \eqref{eq:1.26}, \eqref{eq:1.27} deduce that
\[
\|A_{\e ,r}\|_{L^q\to L^q}\leq \frac{1}{2}.
\]
As in the last part of Case I, we then show that $v\in L^q(V)$.
This implies that $u\in L^q(U)$ and, since $U\subset \Omega$ and $q\geq \frac{Np}{N-4p}$ were arbitrary, the proof of Case II is completed.
\end{proof}

\begin{altproof}{Theorem \ref{th:BK}}
Let  $u\in W^{2,2}_\text{loc}(\Om)$ be a weak solution to \eqref{eq}.
Then $u\in W^{4,\frac{2N}{N+4}}_\text{loc}(\Omega)$.
We show that $u\in L^q_\text{loc}(\Omega)$, for every $q\geq 1$.
If $N=5$ or  $N=6$, then, by Lemma \ref{lem:2}, $u\in L^q_\text{loc}(\Omega)$, for every $q\geq 1$, and we are done.
If $N> 6$, then we define $p_1 := \frac{2N}{N+4}$, $5p_1<N$, and we use Lemma \ref{lem:2}  to obtain $u\in L^\frac{Np_1}{N-5p_1}_\text{loc}(\Omega)$.
Since $\frac{Np_1}{N-5p_1} = \frac{2N}{N-6}$, 
$$p_1<p_2:=\frac{Np_1}{N-5p_1}\frac{N-6}{N+2}=\frac{2N}{N+2} <\frac{N}{4}.$$
Fix $U \subset\subset \Omega$. Observe that $p_2\frac{N+2}{8}=\frac{N}{4}$ and by the H\"older inequality
\[
\begin{aligned}
	\int_{U}|g(x,u)|^{p_2}\d x &\leq c\int_{U} |a(x)|^{p_2}\,dx+c\Big(\int_{U}|a(x)|^{p_2\frac{N+2}{8}}\,dx\Big)^{\frac{8}{N+2}}\Big(\int_{U}|u|^{\frac{Np_1}{N-5p_1}}\d x\Big)^{\frac{N-6}{N+2}} < \infty
\end{aligned}
\]
for some constant $c>0$. Therefore we get
 $\Delta ^2 u =g(x,u)\in L^{p_2}_\text{loc}(\Omega)$.
 Since $u\in W^{4,p_1}_\text{loc}(\Omega)\subset L^{p_2}_\text{loc}(\Omega)$, we use Lemma \ref{lem:1.1} to get $u\in W^{4,p_2}_\text{loc}(\R^N)$. Let $K$ be the largest natural number less than $\frac{N-4}{2}$.
We continue applying Lemma \ref{lem:2} in this fashion and get a finite sequence $(p_k)_{k=1}^{K}$ such that for $k=1,...,K$
\begin{eqnarray*}
	&&p_k:=\frac{2N}{N+6-2k},\\
	&&p_k\frac{N+6-2k}{8}=\frac{N}{4},\\
	&&p_{k+1} = \frac{Np_{k}}{N-5p_{k}}\frac{N-4-2k}{N+4-2k}, \quad\hbox{if }k\geq 1.
\end{eqnarray*}
By the definition of $K$, we get $5p_K<N$, $\frac{Np_K}{N-5p_K}\geq N$ and $u\in L^\frac{Np_K}{N-5p_K}_\text{loc}(\Omega)$.
Finally,  by Lemma \ref{lem:2} we obtain that  $u\in L^q_\text{loc}(\Omega)$, for every $q\geq 1$.
Since $\Delta^2 u =  g(x,u)\in L^q_\text{loc}(\Omega)$, for every $1\leq q <\infty$, 
by Lemma \ref{lem:1.1}, $u\in W^{4,q}_\text{loc}(\Omega)$, $q \geq 1$, so by the Sobolev embedding $u \in C^{3,\alpha}_\text{loc}(\Omega)$, for every $0<\alpha<1$.
\end{altproof}

\section{Poho\v{z}aev identity}\label{sec:Poho}

\begin{altproof}{Theorem \ref{th:Poho}}
	One can find $\f\in C^\infty (\R)$ satisfying $\f\vert_{(-\infty,1]}\equiv 1$, $\f\vert_{[2,\infty)}\equiv 0$ and $0\leq \f \leq 1$.
	For every $n\geq 1$, we define $\f_n \in C^\infty_0(\R^N)$ by $\f_n (x) := \f \left (\frac{|x|^2}{n^2}\right )$.

	By Theorem \ref{th:BK}, we may assume that $u\in C^{3,\alpha}_\text{loc}(\R^N)\cap W^{4,q}_{\text{loc}}(\R^N)$, $0<\alpha<1$, $1\leq q < \infty$, so 
	\[
	0 =\Delta^2 u - g(x,u) \quad\text{a.e. in }\R^N.
	\]
	Thus, for a.e. $x\in \R^N$ and for every $n$, we obtain
	\begin{equation}\label{eq:1.3}
		0 = (\Delta ^2 u - g(x,u)) \f_n  x \cdot \nabla u.
	\end{equation}
	The following identities hold
	\[
	g(x,u)\f_n  x \cdot \nabla u = \mathrm{div}\le \f_n G(x,u)x \pr - G(x,u)  x \cdot \nabla \f_n - N\f_n G(x,u)-\vp_n x\cdot \partial_x G(x,u) 
	\]
	and
	\[
	\Delta^2 u \f_n x \cdot \nabla u = \mathrm{div}\le \f_n (x\cdot \nabla u) \nabla \Delta u\pr - (x\cdot \nabla u ) (\nabla \f_n \cdot \nabla \Delta u) - \f_n\nabla (x\cdot \nabla u) \cdot \nabla (\Delta u).
	\]
	We transform the rightmost term of the above equation
	\[
	\begin{aligned}
		\f_n\nabla (x\cdot \nabla u) \cdot \nabla (\Delta u)&= -\f_n \Delta u\Delta (x\cdot \nabla u) + \f_n \mathrm{div }\le \Delta u\nabla (x\cdot \nabla u)\pr\\
		&=-\f_n \Delta u (2\Delta u + x\cdot \nabla \Delta u) + \mathrm{div}\le \f_n \Delta u \nabla (x\cdot \nabla u)\pr - \Delta u \nabla \f_n \cdot \nabla (x\cdot \nabla u)\\
		&= -2\f_n (\Delta u)^2 - \f_n \Delta  u x\cdot \nabla \Delta u+ \mathrm{div}\le \f_n \Delta u \nabla (x\cdot \nabla u)\pr - \Delta u \nabla \f_n \cdot \nabla (x\cdot \nabla u).
	\end{aligned}
	\]
	Finally, we rewrite the second term of the above line as follows
	\[
	\f_n \Delta  u x\cdot \nabla \Delta u = \mathrm{div}\le\f_n\frac{ (\Delta u )^2}{2} x\pr - \frac{1}{2}(\Delta u)^2\nabla \f_n \cdot x - \frac{N}{2}\f_n (\Delta u)^2.
	\]
	Putting the above identities into \eqref{eq:1.3} we get
	\[
	\begin{aligned}
		0&= -\mathrm{div}\le \f_n G(x,u)x \pr + G(x,u)  x \cdot \nabla \f_n + N\f_n G(x,u) +\vp_n x\cdot \partial_x G(x,u)\\
		 &\qquad+\mathrm{div}\le \f_n (x\cdot \nabla u) \nabla \Delta u\pr - (x\cdot \nabla u ) (\nabla \f_n \cdot \nabla \Delta u)
		-\mathrm{div}\le \f_n\le \Delta u \nabla (x\cdot \nabla u)
		  - \frac{(\Delta u )^2}{2}x\pr\pr\\
		  &\qquad - \frac{N-4}{2}\f_n (\Delta u )^2  -\frac{1}{2} (\Delta u )^2 x \cdot \nabla \f_n + \Delta u \nabla \f_n \cdot \nabla (x\cdot \nabla u)
	\end{aligned}
	\]
	or, equivalently,
	\begin{multline}\label{eq:1.4}
		\mathrm{div}\le \f_n \le G(x,u)x + \Delta u \nabla(x\cdot \nabla u) - x\cdot \nabla u \nabla \Delta u  - \frac{(\Delta u )^2}{2}x\pr \pr \\
		=  G(x,u)  x \cdot \nabla \f_n + N\f_n G(x,u)+\vp_n x\cdot \partial_x G(x,u) - (x\cdot \nabla u ) (\nabla \f_n \cdot \nabla \Delta u)\\ - \frac{N-4}{2}\f_n (\Delta u )^2  -\frac{1}{2} (\Delta u )^2 x \cdot \nabla \f_n + \Delta u \nabla \f_n \cdot \nabla (x\cdot \nabla u).
	\end{multline}
	Fix $n\geq 1 $ and take $R>0$ such that $\mathrm{supp}\, \f_n\subset B_R$.
	By the divergence theorem, we obtain
	\[
	\begin{aligned}
		0 &= \int_{B_R}G(x,u)  x \cdot \nabla \f_n + N\f_n G(x,u)+\vp_n x\cdot \partial_x G(x,u)  - (x\cdot \nabla u ) (\nabla \f_n \cdot \nabla \Delta u)\\
		&\qquad - \frac{N-4}{2}(\Delta u )^2\f_n   -\frac{1}{2} (\Delta u )^2 x \cdot \nabla \f_n + \Delta u \nabla \f_n \cdot \nabla (x\cdot \nabla u)\,dx.
	\end{aligned}
	\]
	Note that
	\[
	-\int_{B_R} (x\cdot \nabla u ) (\nabla \f_n \cdot \nabla \Delta u)\,dx = \int_{B_R}\Delta u \nabla \f_n \cdot \nabla (x \cdot \nabla u) + \Delta u \Delta \f_n x\cdot \nabla u\,dx - \underbrace{\int_{B_R}\mathrm{div}\, \le x\cdot \nabla u \Delta u \nabla \f_n \pr}_{=0}\,dx.
	\]
	Summing up, we have
	\begin{equation}\label{eq:1.5}
		\begin{aligned}
			0 &= \int_{B_R}G(u)  x \cdot \nabla \f_n + N\f_n G(x,u)+\vp_n x\cdot \partial_x G(x,u) + 2 \Delta u \nabla \f_n \cdot \nabla (x \cdot \nabla u) + \Delta u \Delta \f_n x\cdot \nabla u\\
			&\qquad\qquad - \frac{N-4}{2}(\Delta u )^2\f_n   -\frac{1}{2} (\Delta u )^2 x \cdot \nabla \f_n \,dx\\
			&= \int_{\R^N}G(u)  x \cdot \nabla \f_n + N\f_n G(x,u)+\vp_n x\cdot \partial_x G(x,u) + 2 \Delta u \nabla \f_n \cdot \nabla (x \cdot \nabla u) + \Delta u \Delta \f_n x\cdot \nabla u\\
			&\qquad\qquad - \frac{N-4}{2}(\Delta u )^2\f_n   -\frac{1}{2} (\Delta u )^2 x \cdot \nabla \f_n\,dx.
		\end{aligned}
	\end{equation}
	We return to \eqref{eq:1.5} and pass to the limit as $n\to \infty$ to obtain
	\[
	0 =N  \int_{\R^N}G(x,u)\,dx +\int_{\R^N}x\cdot \partial_x G(x,u)\,dx  - \frac{N-4}{2}\int_{\R^N}|\Delta u|^2\,dx,
	\]
	where we used Lebesgue's dominated convergence theorem and the properties of $\f_n$. The proof is completed.
\end{altproof}

\section{Lions lemma}\label{sec:Lions}
We prove a biharmonic variant of Lion's lemma, cf. \cite{Lions1,Lions2}, \cite[Section 2]{Mederski}.
\begin{lemma}\label{lem:Lions}
	Suppose that $(u_n)$ is bounded in $\cD^{2,2}(\R^N)$ and for some $r>0$ 	
	\begin{equation}\label{eq:LionsCond11}
		\lim_{n\to\infty}\sup_{y\in \R^N} \int_{B(y,r)} |u_n|^2\,dx=0.
	\end{equation}
	Then  
	$$\int_{\R^N} \Psi(u_n)\, dx\to 0\quad\hbox{as } n\to\infty$$
	for every continuous $\Psi:\R\to \R$ satisfying
	\begin{equation}
		\label{eq:Psi}
		\lim_{s\to 0} \frac{\Psi(s)}{|s|^{2^{**}}}=\lim_{|s|\to\infty} \frac{\Psi(s)}{|s|^{2^{**}}}=0.
	\end{equation}
\end{lemma}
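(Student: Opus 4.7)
The plan is to reduce the lemma to an $L^q$-convergence statement via the growth of $\Psi$, and then to prove that $L^q$-vanishing by interpolation on balls combined with Sobolev boundedness. The growth condition \eqref{eq:Psi} and the continuity of $\Psi$ yield, for any $\varepsilon>0$ and any fixed $q\in(2,2^{**})$, constants $0<\delta<M$ and $C_\varepsilon>0$ such that
$$|\Psi(s)|\leq\varepsilon|s|^{2^{**}}+C_\varepsilon|s|^q\qquad\text{for all }s\in\R$$
(the first term handles $|s|\leq\delta$ and $|s|\geq M$, the second handles the compact middle range, where $\Psi$ is bounded and $|s|^q$ is bounded below). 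Combined with the uniform Sobolev bound $\|u_n\|_{L^{2^{**}}(\R^N)}\leq C$ from \eqref{eq:0.1}--\eqref{eq:0.2}, this reduces the conclusion to establishing $\|u_n\|_{L^q(\R^N)}\to 0$ for some fixed $q\in(2,2^{**})$.

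To prove the $L^q$-vanishing, cover $\R^N$ by balls $B_i:=B(y_i,r)$ with finite overlap $N_0$ using the radius $r$ from the hypothesis. On each ball, log-convex H\"older with $\mu:=(2^{**}-q)/(2^{**}-2)$ and $\nu:=(q-2)/(2^{**}-2)$ (so $\mu+\nu=1$) gives
$$\int_{B_i}|u_n|^q\leq\Bigl(\int_{B_i}|u_n|^2\Bigr)^\mu\Bigl(\int_{B_i}|u_n|^{2^{**}}\Bigr)^\nu.$$
Summing and extracting the vanishing supremum,
$$\int_{\R^N}|u_n|^q\leq\Bigl(\sup_i\int_{B_i}|u_n|^2\Bigr)^\mu\sum_i\Bigl(\int_{B_i}|u_n|^{2^{**}}\Bigr)^\nu,$$
and the first factor tends to zero by hypothesis; it remains to bound the second sum uniformly in $n$.

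The principal obstacle is that $\nu<1$, which forbids a direct estimate of $\sum_i(\int_{B_i}|u_n|^{2^{**}})^\nu$ by $N_0\|u_n\|_{L^{2^{**}}}^{2^{**}}$ on an infinite cover; moreover, $\cD^{2,2}(\R^N)\not\subset L^2(\R^N)$ rules out the standard workaround via a global $L^2$-bound. To get around this, I would localise via a cutoff $\eta_i\in C_0^\infty(B(y_i,2r))$ with $\eta_i\equiv 1$ on $B_i$ and apply the global Sobolev inequality $\|u_n\eta_i\|_{L^{2^{**}}(\R^N)}\leq C\|\Delta(u_n\eta_i)\|_{L^2(\R^N)}$ to the product. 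Expanding $\Delta(u_n\eta_i)$ by the product rule and placing $\nabla u_n\in L^{2^*}_{\text{loc}}$ and $u_n\in L^{2^{**}}_{\text{loc}}$ via the embeddings \eqref{eq:0.1}--\eqref{eq:0.2} bounds each $\int_{B_i}|u_n|^{2^{**}}$ by quantities whose $i$-sums (with bounded overlap) are controlled by global $\cD^{2,2}$-norms. Raising to the $\nu$-power, choosing $q$ close enough to $2^{**}$ so that the resulting exponents exceed $1$ (allowing Jensen's inequality to convert the fractional-power sums into integer-power ones), and absorbing the $|u_n|^2$-contribution introduced by the cutoff into the hypothesised local smallness $\sup_i\int_{B_i}|u_n|^2\to 0$, yields $\sum_i(\int_{B_i}|u_n|^{2^{**}})^\nu\leq C$ uniformly in $n$. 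The delicate balancing of these exponents so that every resulting sum converges is the crux of the argument.
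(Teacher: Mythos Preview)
Your reduction to $\|u_n\|_{L^q(\R^N)}\to 0$ is incorrect: this quantity need not even be finite under the hypotheses. Take $u_n(x)=n^{-(N-4)/2}\phi(x/n)$ for a fixed $\phi\in C_0^\infty(\R^N)\setminus\{0\}$. Then $\|\Delta u_n\|_{L^2}=\|\Delta\phi\|_{L^2}$ is constant and $\sup_y\int_{B(y,r)}|u_n|^2\leq|B(0,r)|\,\|\phi\|_\infty^2\,n^{-(N-4)}\to 0$, so both hypotheses hold; yet $\|u_n\|_{L^q}^q=n^{N-q(N-4)/2}\|\phi\|_{L^q}^q\to\infty$ for every $q\in(2,2^{**})$. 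The pointwise bound $|\Psi(s)|\leq\varepsilon|s|^{2^{**}}+C_\varepsilon|s|^q$ is valid, but integrating it over all of $\R^N$ discards the information that the $|s|^q$ term is only needed on the compact range $\delta\leq|s|\leq M$. Since $\cD^{2,2}(\R^N)\not\hookrightarrow L^q(\R^N)$ for $q<2^{**}$, any correct reduction must keep the restriction to the level set $\{\delta<|u_n|\leq M\}$.

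Even with that correction, your covering argument does not close. Expanding $\Delta(u_n\eta_i)$ produces a lower-order term $u_n\Delta\eta_i$; whether you estimate it in $L^2$ or in $L^{2^{**}}$ on the enlarged ball, after raising to the power $\nu<1$ and summing over $i$ you are left with either $\sum_i\big(\int_{\tilde B_i}|u_n|^2\big)^\alpha$ (not controllable, since $u_n\notin L^2$) or $\sum_i\big(\int_{\tilde B_i}|u_n|^{2^{**}}\big)^\nu$, which is the very sum you set out to bound. No choice of $q$ near $2^{**}$ avoids this: the exponent on the $|u_n|^{2^{**}}$ contribution is always exactly $\nu<1$, so one cannot bootstrap the $L^{2^{**}}$ bound through itself. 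The paper circumvents the difficulty by a different device: it introduces the truncation $w_n=F(u_n)$ with $F(t)=\min\{|t|,\,\delta^{1-2^{**}/2^*}|t|^{2^{**}/2^*}\}$, shows that $(w_n)$ is bounded in $W^{1,2^*}(\R^N)$ (using $\nabla u_n\in L^{2^*}$ and the chain rule, not $\Delta u_n$), and then applies the Sobolev inequality on unit cubes so that the summable factor is $\int_{\Omega+y}\big(|w_n|^{2^*}+|\nabla w_n|^{2^*}\big)$, which tiles to the global $W^{1,2^*}$-norm with exponent exactly~$1$.
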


We prove the following result, which implies the variant of Lions's lemma in $\D$.
\begin{lemma}\label{lem:Conv}
	Suppose that   $(u_n)\subset \D$ is bounded. Then $u_n(\cdot+y_n)\weakto 0$ in $\D$ for any $(y_n)\subset \Z^N$ if and only if
	\begin{equation*}
		\int_{\R^N} \Psi(u_n)\, dx\to 0\quad\hbox{as } n\to\infty
	\end{equation*}
	for any continuous $\Psi:\R\to \R$ satisfying \eqref{eq:Psi}.
\end{lemma}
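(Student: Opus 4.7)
\emph{Forward direction.} The strategy is to verify the hypothesis of the already-established Lemma \ref{lem:Lions} and apply it. Assume $u_n(\cdot+y_n)\weakto 0$ in $\D$ for every $(y_n)\subset\Z^N$; I want to show $\sup_{y\in\R^N}\int_{B(y,r)}|u_n|^2\,dx\to 0$ for every $r>0$. If this failed, after subsequencing there would exist $\delta>0$ and $\tilde y_n\in\R^N$ with $\int_{B(\tilde y_n,r)}|u_n|^2\geq\delta$. Rounding $\tilde y_n$ to a nearest lattice point $z_n\in\Z^N$ gives $B(\tilde y_n,r)\subset B(z_n,r+\sqrt{N})$ and hence $\int_{B(0,r+\sqrt{N})}|v_n|^2\geq\delta$ for $v_n:=u_n(\cdot+z_n)$. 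By hypothesis $v_n\weakto 0$ in $\D$, and the Sobolev embedding $\D\hookrightarrow L^{2^{**}}(\R^N)$ then yields $v_n\weakto 0$ in $L^{2^{**}}$, so $v_n\to 0$ in $\mathcal{D}'(\R^N)$. On the other hand, \eqref{eq:0.1}--\eqref{eq:0.2} make $(v_n)$ bounded in $W^{2,2}$ of every bounded ball, so Rellich--Kondrachov extracts a strongly $L^2$-convergent subsequence on $B(0,r+\sqrt{N}+1)$; uniqueness of limits forces the strong limit to be $0$, contradicting the lower bound. The desired conclusion $\int_{\R^N}\Psi(u_n)\,dx\to 0$ is then immediate from Lemma \ref{lem:Lions}.

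\emph{Reverse direction.} Fix $(y_n)\subset\Z^N$ and let $v_n:=u_n(\cdot+y_n)$. Translation invariance of Lebesgue measure gives $\int_{\R^N}\Psi(v_n)\,dx=\int_{\R^N}\Psi(u_n)\,dx\to 0$ for every admissible $\Psi$. Extract a subsequence with $v_n\weakto v$ in $\D$; Rellich on an exhausting sequence of balls together with a diagonal subsequence lets me further assume $v_n\to v$ almost everywhere in $\R^N$. To prove $v=0$, for each $\epsilon\in(0,1)$ I choose a continuous, nonnegative, bounded function $\Psi_\epsilon:\R\to[0,\infty)$ with $\mathrm{supp}\,\Psi_\epsilon\subset\{\epsilon\leq|s|\leq 1/\epsilon\}$ and $\Psi_\epsilon(s)>0$ on $\{2\epsilon\leq|s|\leq 1/(2\epsilon)\}$; such $\Psi_\epsilon$ vanishes on neighborhoods of $0$ and of $\infty$, so trivially satisfies \eqref{eq:Psi}. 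Applying the hypothesis and Fatou's lemma,
\[
0\leq\int_{\R^N}\Psi_\epsilon(v)\,dx\leq\liminf_{n\to\infty}\int_{\R^N}\Psi_\epsilon(v_n)\,dx=0,
\]
so $|v|\notin(2\epsilon,1/(2\epsilon))$ a.e.; sending $\epsilon\to 0^+$ forces $v=0$ a.e. Since every weak-limit point of the bounded sequence $(v_n)$ is $0$, we conclude $u_n(\cdot+y_n)=v_n\weakto 0$ in $\D$.

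\emph{Main obstacle.} The delicate point common to both implications is passing from weak convergence in the high-regularity space $\D$ to pointwise or strong local information. It is resolved by combining the embedding $\D\hookrightarrow L^{2^{**}}(\R^N)$ with Rellich compactness on bounded balls, the boundedness in $W^{2,2}_{\text{loc}}$ being delivered by \eqref{eq:0.1}--\eqref{eq:0.2}. A secondary, purely combinatorial, step is the lattice-rounding $\tilde y_n\mapsto z_n$, which costs only a $\sqrt{N}$ enlargement of the ball radius and transfers the uniform-in-$\R^N$ $L^2$-mass condition of Lemma \ref{lem:Lions} into a condition controlled by our hypothesis on lattice translates. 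The bump-function construction $\Psi_\epsilon$ is then routine, since any continuous function supported in an annulus $\{\epsilon\leq|s|\leq 1/\epsilon\}$ automatically meets \eqref{eq:Psi}.
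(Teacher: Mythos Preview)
Your forward direction is circular. In the paper's logical order, Lemma~\ref{lem:Lions} is only \emph{stated} before Lemma~\ref{lem:Conv}; its proof comes immediately afterwards and explicitly invokes Lemma~\ref{lem:Conv} (the final sentence reads ``and by Lemma~\ref{lem:Conv} we conclude''). So when you call Lemma~\ref{lem:Lions} ``already-established'' and deduce the forward implication of Lemma~\ref{lem:Conv} from it, you are assuming what is to be proved. The substantive analytic content of the paper's forward direction---the Lipschitz truncation $w_n=F(u_n)$, the verification that $(w_n)$ is bounded in $W^{1,2^*}(\R^N)$ via the chain rule and the inclusion $\D\subset\cD^{1,2^*}(\R^N)$, the Sobolev inequality on unit cubes, and the summation over $\Z^N$---is entirely absent from your argument. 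Since exactly that machinery is what ultimately drives Lemma~\ref{lem:Lions} as well, your reduction recovers nothing; the hard step has simply been displaced, not done.

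Your reverse direction, by contrast, is correct and takes a route different from the paper's. The paper argues by contradiction with the single explicit test function $\Psi(s)=\min\{|s|^p,\eps^{p-q}|s|^q\}$ and an $L^p$ lower bound on a bounded domain. You instead extract a.e.\ convergence via Rellich and a diagonal argument, apply Fatou with compactly supported bump functions $\Psi_\eps$, and exhaust $(0,\infty)$ as $\eps\to 0$; the subsequence principle then upgrades this to full weak convergence. Both arguments are short; yours is a bit softer (no explicit $\Psi$ needed), while the paper's avoids the diagonal extraction.
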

\begin{proof}
	Let $(u_n)$ be a sequence in $\D$ be such that $u_n(\cdot+y_n)\weakto 0$ in $\D$ for every $(y_n)\subset \Z^N$.
	Take any $\eps>0$ and $2^\ast<p<2\gg$ and suppose that $\Psi$ satisfies \eqref{eq:Psi}. Then we find $0<\delta<M$ and $c(\eps)>0$ such that 
	\begin{eqnarray*}
		\Psi(s)&\leq& \eps |s|^{2\gg}\quad\hbox{ for }|s|\leq \delta,\\
		\Psi(s)&\leq& \eps |s|^{2\gg}\quad\hbox{ for }|s|>M,\\
		\Psi(s)&\leq& c(\eps) |s|^{p}\quad\hbox{ for }|s|\in (\delta,M].
	\end{eqnarray*}
	Let us define $(w_n)$ by
	\[
	w_n(x):=
	\begin{cases}|u_n(x)| &\text{for }|u_n(x)|>\delta,\\
		|u_n(x)|^{2\gg/2^\ast}\delta ^{1-2\gg/2^*} &\text{for }|u_n(x)|\leq \delta.
	\end{cases}
	\] 
	We are about to show that $(w_n)$ is bounded in $W^{1,2^\ast}(\R^N)$.
	First of all, we have
	\begin{equation}\label{eq:1.30}
		\begin{aligned}
			\int_{\R^N} |w_n(x)|^{2^\ast}\d x &= \int_{\{|u_n|\leq \delta\}}\delta ^{2^\ast - 2\gg} |u_n|^{2\gg}\d x + \int_{\{|u_n|\geq \delta\}}|u_n|^{2^\ast}\d x\\
			&= \delta ^{2^\ast - 2\gg}\int_{\{|u_n|\leq \delta\}}|u_n|^{2\gg}\d x + \int_{\{|u_n|> \delta\}}\frac{|u_n|^{2\gg}}{|u_n|^{2\gg-2^\ast}}\d x\\
			&\leq \delta ^{2^\ast - 2\gg}\int_{\{|u_n|\leq \delta\}}|u_n|^{2\gg}\d x + \int_{\{|u_n|> \delta\}}\frac{|u_n|^{2\gg}}{\delta^{2\gg-2^\ast}}\d x \\
			&= \delta^{2^\ast-2\gg} \int_{\R^N}|u_n|^{2\gg}\d x.
		\end{aligned}	
	\end{equation}
	By the absolute continuous characterization (see \S 1.1.3 in \cite{Mazja}), we infer that each $u_n$ is absolutely continuous on almost every line parallel to the $0x_i$-axis, for $i=1,\ldots, N$. Thus the same holds for each $w_n$, since $w_n = F(u_n)$, where $F(t) = \min\{\delta^{1-2\gg/2^\ast}|t|^{2\gg/2^\ast},|t|\}$ is a globally Lipschitz function.
	Moreover, for every $i=1,\ldots, N$, we have
	\[
	\frac{\partial w_n}{\partial x_i} = 
	\begin{cases}
		\frac{2\gg}{2^\ast}\delta^{1-2\gg/2^\ast}\mathrm{sign}(u_n)|u_n|^{2\gg/2^\ast-1}\frac{\partial u_n}{\partial x_i}	, &\text{for } |u_n(x)|\leq \delta,\\
		\mathrm{sign}(u_n) \frac{\partial u_n}{\partial x_i},&\text{for }|u_n(x)|> \delta.	
	\end{cases}
	\]
	Thus
	\begin{equation}\label{eq:1.31}
		\begin{aligned}
			\int_{\R^N}\left | \frac{\partial w_n}{\partial x_i}\right|^{2^\ast}\d x &= \le \frac{2\gg}{2^\ast}\pr ^{2^\ast}\delta^{2^\ast-2\gg}\int_{\{|u_n|\leq \delta\}} |u_n|^{2\gg- 2^\ast}\left |\frac{\partial u_n}{\partial x_i}\right|^{2^\ast} \d x + \int_{\{|u_n|>\delta\}}\left|\frac{\partial u_n}{\partial x_i} \right |^{2^\ast}\d x\\
			&\leq \le \frac{2\gg}{2^\ast}\pr ^{2^\ast}\int_{\{|u_n|\leq \delta\}} \left |\frac{\partial u_n}{\partial x_i}\right|^{2^\ast} \d x + \int_{\{|u_n|>\delta\}}\left|\frac{\partial u_n}{\partial x_i} \right |^{2^\ast}\d x\\
			&\leq \le \frac{2\gg}{2^\ast}\pr ^{2^\ast} \int_{\R^N}\left| \frac{\partial u_n}{\partial x_i}\right|^{2^\ast} \d x.
		\end{aligned}	
	\end{equation}
	By \eqref{eq:1.30}, \eqref{eq:1.31} (again using an absolute continuous characterization on lines from \S 1.1.3 \cite{Mazja}) and the fact that $(u_n)$ is bounded in $\D$, we conclude that $(w_n)$ is bounded in $W^{1,2^\ast}(\R^N)$.
	
	Let  $\Om=(0,1)^N$ and $y\in  \R^N$ be arbitrary.
	Then, by the Sobolev inequality one has 
	\begin{eqnarray*}
		\int_{\Om+y}\Psi(u_n)\d x&=&
		\int_{(\Om+y)\cap\{\delta<|u_n|\leq M\}}\Psi(u_n)\d x
		+\int_{(\Om+y)\cap (\{|u_n|> M\}\cup \{|u_n|\leq \delta\})}\Psi(u_n)\d x\\
		&\leq &
		c(\eps)\int_{(\Om+y)\cap\{\delta<|u_n|\leq M\}}|w_n|^p\d x
		+\eps \int_{(\Om+y)\cap (\{|u_n|> M\}\cup \{|u_n|\leq \delta\})}|u_n|^{2\gg}\d x\\
		&\leq &c(\eps) C\Big(\int_{\Om+y}|w_n|^{2^\ast} + |\nabla w_n|^{2^\ast}\d x \Big)\Big(\int_{\Om+y}|w_n|^{p}\d x\Big)^{1-2^{\ast}/p}+\eps \int_{\Om+y}|u_n|^{2\gg}\d x,
	\end{eqnarray*}
	where $C>0$ is a constant from the Sobolev inequality.
	Then we sum the inequalities over $y\in\Z^N$ and get  
	$$\begin{aligned}
		\int_{\R^N}\Psi(u_n)\d x	&\leq c(\eps) C\left(\int_{\R^N}|w_n|^{2^\ast} + |\nabla w_n|^{2^\ast}\d x \right)\left(\sup_{y\in\Z^N}\int_{\Om}|w_n(\cdot +y)|^{p}\d x\right)^{1-2^\ast/p}+\eps \int_{\R^N}|u_n|^{2\gg}\d x.
	\end{aligned}$$
	Let us take $(y_n)\subset\Z^N$ such that
	$$\sup_{y\in\Z^N}\int_{\Om}|w_n(\cdot+y)|^{p}\d x\leq 2\int_{\Om}|w_n(\cdot+y_n)|^{p}\d x$$
	for any $n\geq 1$.
	By the assumption
	$u_n(\cdot+y_n)\weakto 0$ in $\D$ and passing to a subsequence  we obtain  $u_n(\cdot+y_n)\to 0$ in $L^p(\Om)$.
	
	Since $|w_n(x)|\leq |u_n(x)|$,  we infer that $w_n(\cdot+y_n)\to 0$ in $L^p(\Om)$. 
	Therefore 
	$$\limsup_{n\to\infty}\int_{\R^N}\Psi(u_n)\d x\leq \eps \limsup_{n\to\infty}\int_{\R^N}|u_n|^{2\gg}\d x,$$
	and since $\eps>0$ is arbitrary, the assertion follows.
	
	On the other hand, suppose that  $u_n(\cdot+y_n)$ does not converge to $0$  in $\D$, for some $(y_n)$  in $\Z^N$, and $\Psi(u_n)\to 0$ in $L^1(\R^N)$. We may assume that $u_n(\cdot+y_n)\to u_0\neq 0$ in $L^p(\Om)$ for some bounded domain $\Om\subset\R^N$ and $1<p<2\gg.$ Take any $\eps>0$, $q>2\gg$ and let us define $\Psi(s):=\min\{|s|^p,\eps^{p-q}|s|^q\}$ for $s\in\R$. Then
	\begin{eqnarray*}
		\int_{\R^N} \Psi(u_n)\d x&\geq& \int_{\Om + y_n\cap \{|u_n|\geq \eps\}}|u_n|^p\d x+\int_{\Om+ y_n\cap \{|u_n|\leq \eps\}} \eps^{q-p}|u_n|^q\d x\\
		&=& \int_{\Om+y_n} |u_n|^p\d x+\int_{\Om+y_n\cap \{|u_n|\leq \eps\}} \eps^{p-q}|u_n|^q-|u_n|^p\d x\\
		&\geq& \int_{\Om+y_n} |u_n|^p\d x - \e^p|\Omega|.\\
	\end{eqnarray*}
	Thus we get $u_n(\cdot + y_n) \to 0$ in $L^p(\Om)$ and this contradicts $u_0\neq 0$.
\end{proof}

\begin{altproof}{Lemma \ref{lem:Lions}}
	Suppose that there is $(y_n)\subset \Z^N$ such that $u_n(\cdot+y_n)$ does not converge weakly to $0$ in $\D$. Since  $u_n(\cdot+y_n)$ is bounded, there is $u_0\neq 0$ such that, up to a subsequence,
	$$u_n(\cdot+y_n)\weakto u_0\quad \text{in }\D,$$
	as $n\to\infty$. We find $y\in \R^N$ such that $u_0\chi_{B(y,r)}\neq 0$ in $L^2(B(y,r))$. 
	Observe that, passing to a subsequence, we may assume that $u_n(\cdot+y_n)\to u_0$ in $L^2(B(y,r))$. Then, in view of \eqref{eq:LionsCond11}
	$$\int_{B(y,r)} |u_n(\cdot+y_n)|^2\,dx=\int_{B(y_n+y,r)} |u_n|^2\,dx\to 0$$
	as $n\to\infty$, which contradicts the fact $u_n(\cdot+y_n)\to u_0\neq 0$ in $L^2(B(y,r))$. Therefore $u_n(\cdot+y_n)\weakto 0$ in $\D$ for any $(y_n)\subset \Z^N$ and by Lemma \ref{lem:Conv} we conclude.
\end{altproof}

\section{Proof of Theorem \ref{thm:2}}\label{sec:BL}

In this section we adapt a variational approach from \cite[Section 3]{Mederski} for the bi-Laplacian.
Let
\[
\begin{aligned}	
	G_-(s):=
	\begin{cases}
		\int_0^s \max\{-g(t),0\}\, dt  &\text{for } s\geq 0,\\
		\int_{s}^0 \max\{g(t),0\}\, dt&\text{for }s<0.
	\end{cases}
\end{aligned}
\]
Notice that $G_+$, $G_-\geq 0$ and $G= G_+ - G_-$.

First, we sketch our approach with an approximation $J_\eps$ of $J$ and present some auxiliary lemmas.
The proof of Theorem \ref{thm:2} is postponed to the end of the section.
Let
\[
g_{+}(s):=G_+^\prime (s) \quad\text{and}\quad g_{-}(s):=g_{+}(s)-g(s), \qquad s\in \R.
\]
Notice that $G_{-}(s) = \int_0^sg_-(t)\d t\geq 0$, for $s\in\R$.
In view of (g1) and (g3), there is some $c>0$ such that for every $s\in \R$
\begin{equation}\label{eq:1.39}
	|G_+(s)|\leq c|s|^{2\gg},
\end{equation}
so $G_{+}(u)\in L^{1}(\R^N)$ whenever $u\in\cD^{2,2}(\R^N)\subset L^{2^{**}}(\R^N)$.
On the other hand, $G_{-}(u)$ may not be integrable, for $u\in \D$, unless $G_{-}(u)\leq c|u|^{2^{**}}$ for some $c>0$. To overcome this problem, for $\eps\in (0,1)$, we define $\vp_\eps:\R\to [0,1]$ by 
\[
\vp_\eps(s):=
\begin{cases}\frac{1}{\e^{2\gg-1}}|s|^{2\gg-1} &\text{for } |s|\leq \e,\\
	1 &\text{for }|s|\geq \e.
\end{cases}
\]
We introduce a new functional
\begin{equation}\label{eq:actionEPS}
	J_\eps(u):=\frac12\int_{\R^N} |\Delta u|^2+\int_{\R^N} G_{-}^\e(u)\, dx-\int_{\R^N} G_{+}(u)\, dx,
\end{equation}
where $G_{-}^\e(s) := \int_0^s \f_\e(t)g_{-}(t)\d t$, $s\in \R$.
By (g0), 
 there is $c(\e)>0$ such that
\begin{equation}\label{eq:1.32}
	|\f_\e(s)g_{-}(s)|\leq c(\e) |s|^{2\gg -1},\quad s\in\R.
\end{equation}
This implies that $G_{-}^{\e}(s)\leq c(\eps)|s|^{2^{**}}$ for any $s\in\R$ and some constant $c(\eps)>0$ depending on $\eps>0$. Hence, for $\eps\in (0,1)$,  $J_\eps$ is well-defined on $\cD^{2,2}(\R^N)$, continuous and $J_\eps'(u)(v)$ exists for any $u\in\cD^{2,2}(\R^N)$ and $v\in\cC_0^\infty(\R^N)$. Therefore, we say that $u$ is a {\em critical point} of $J_\eps$ provided that $J_\eps'(u)(v)=0$ for any $v\in\cC_0^\infty(\R^N)$.

We define, for $\e\in (0,1)$,
\begin{eqnarray*}	
	G_\e &:=& G_+ - G^\e _-,\\	
	\cM_\eps&:=&\Big\{u\in \D\setminus\{0\}: \int_{\R^N}|\Delta u|^2-2\gg\int_{\R^N}G_\eps(u)\, dx=0\Big\},\\
	\cP_\e&:=&\Big\{u\in \D: \int_{\R^N}G_\eps(u)\, dx>0\Big\}\neq \emptyset,\\
	c_\e&:=&\inf_{u\in \cM_\e}J_\e (u).
\end{eqnarray*}
and introduce the map $m_{\cP_\e}:\cP_\e \to \cM _\e$ given by
\[
m_{\cP_\e}(u) = u(r_\e\cdot),
\]
where 
\[
r_\e= r_\e(u) := \le\frac{ 2\gg\int_{\R^N}G_\e(u)\d x}{\int_{\R^N}|\Delta u|^2}\pr ^{1/4} = \frac{\le 2\gg\int_{\R^N}G_\e(u)\d x\pr^\frac{1}{4}}{\|u\|^{1/2}}.
\]
We check that $m_{\cP_\e}$ is well-defined.
If $u\in \cP_\e$, then
\[
\begin{aligned}
	\int_{\R^N}|\Delta (m_{\cP_\e}(u)(x)|^2\d x &= r_\e^{4-N}\int_{\R^N}|\Delta u|^2\d x\\
	&=\le 2\gg\int_{\R^N}G_\e(u)\d x\pr^\frac{4-N}{4}\|u\|^\frac{N-4}{2}\|u\|^2\\
	&= \le 2\gg\int_{\R^N}G_\e(u)\d x \pr \frac{\|u\|^\frac{N}{2}}{\le 2\gg\int_{\R^N}G_\e(u)\d x\pr^\frac{N}{4}}\\
	&= \le 2\gg\int_{\R^N}G_\e(u)\d x \pr r_\e^{-N} \\
	&= 2\gg\int_{\R^N}G_\e(m_{\cP_\e}(u)(x))\d x .
\end{aligned}
\]
\hfill$\square$
\begin{lemma} \label{lemma:ineq}
	For every $\delta>0$ there is $c_\delta>0$ such that
	$$G_\eps(u+v)-G_\eps(u)-\delta |u|^{2\gg}\leq c_\delta |v|^{2\gg}$$
	for all $u,v\in\R$.
\end{lemma}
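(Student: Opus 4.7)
My approach is to write $G_\eps = G_+ - G_-^\eps$ and establish the inequality for each of $H\in\{G_+,G_-^\eps\}$ separately (with $\delta/2$ in place of $\delta$), then sum to get the lemma. Both candidates for $H$ are nonnegative $C^1$ functions vanishing at $0$; each obeys a quadratic--critical growth bound $|H(s)|\le C_0|s|^{2\gg}$ (for $G_+$ this is a consequence of $(g0)$, $(g1)$, $(g3)$ together with continuity; for $G_-^\eps$ it is the bound displayed immediately above \eqref{eq:actionEPS}); and each derivative satisfies $|H'(s)|\le c(1+|s|^{2\gg-1})$ by $(g0)$. Since $H'$ is continuous with $H'(0)=0$, this refines to the convenient form: for every $\delta_1>0$ there exists $A_{\delta_1}>0$ such that $|H'(s)|\le \delta_1 + A_{\delta_1}|s|^{2\gg-1}$ uniformly in $s$, which will be the main quantitative input.

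To establish the reduced inequality I would split cases on the relative sizes of $u$ and $v$. If $|v|\ge |u|/2$, then $|u|^{2\gg}$ and $|u+v|^{2\gg}$ are both bounded by a universal constant times $|v|^{2\gg}$, and the growth bound on $H$ yields the desired estimate directly. If $|v|<|u|/2$, the segment joining $u$ to $u+v$ lies in $\{|s|\le 3|u|/2\}$, so the fundamental theorem of calculus combined with the refined derivative bound gives
\[
|H(u+v)-H(u)| \le \bigl(\delta_1 + A_{\delta_1}(3|u|/2)^{2\gg-1}\bigr)|v|.
\]
Young's inequality converts the principal term $A_{\delta_1}|u|^{2\gg-1}|v|$ into $\tfrac{\delta}{4}|u|^{2\gg} + K(\delta,\delta_1)|v|^{2\gg}$, which is of the required shape.

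The main obstacle is the residual linear contribution $\delta_1|v|$, which has no automatic $|v|^{2\gg}$ dominator. I would handle it by a secondary subdivision on $|u|$: choose $T_\delta>0$ so that $2\delta_1 \le \delta\, T_\delta^{2\gg-1}$; then for $|u|\ge T_\delta$ the constraint $|v|\le |u|/2$ forces $\delta_1|v|\le \tfrac{\delta}{4}|u|^{2\gg}$ automatically. For $|u|<T_\delta$ the triple $(u,v,u+v)$ is confined to a fixed bounded interval where $|H'|$ is uniformly bounded by some $M=M(\delta)$, so the estimate reduces to checking $M|v|\le \tfrac{\delta}{2}|u|^{2\gg}+c_\delta|v|^{2\gg}$ under the additional restriction $|v|\le |u|/2$; this compact-regime inequality is verified by distinguishing the subcase where $|v|$ is bounded below (absorbing into $c_\delta|v|^{2\gg}$) from the subcase where $|v|$ is small (in which, thanks to $|v|\le|u|/2$, either $\delta|u|^{2\gg}$ or $c_\delta|v|^{2\gg}$ still dominates after a final Young-type comparison). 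Taking $c_\delta$ to be the maximum of the constants arising in all regimes and summing the two resulting estimates for $G_+$ and $G_-^\eps$ gives the lemma.
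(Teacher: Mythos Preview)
Your overall strategy---prove a two-sided bound $|H(u+v)-H(u)|\le \tfrac{\delta}{2}|u|^{2\gg}+c_\delta|v|^{2\gg}$ for each of $H=G_+$ and $H=G_-^\eps$ and then combine---is legitimate, and for $H=G_-^\eps$ it works exactly as you say (here one even has $|H'(s)|\le c(\eps)|s|^{2\gg-1}$, so the $\delta_1$-term never appears). The paper proceeds the same way for $G_-^\eps$.

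For $H=G_+$, however, there is a genuine gap in your Subcase~2b. After restricting to $|u|<T_\delta$, $|v|\le|u|/2$, you replace the estimate by the crude bound $|H(u+v)-H(u)|\le M|v|$ and assert that
\[
M|v|\le \tfrac{\delta}{2}|u|^{2\gg}+c_\delta|v|^{2\gg}
\]
can be verified ``by a final Young-type comparison''. This reduced inequality is \emph{false}: along $u=2v\to 0$ the right-hand side is of order $|v|^{2\gg}$ while the left-hand side is of order $|v|$, and since $2\gg>1$ the ratio blows up regardless of the choice of $c_\delta$. No Young inequality can repair this, because you have discarded all information about $H$ beyond a uniform derivative bound on a fixed interval---and a generic $C^1$ function on a compact interval certainly does not satisfy the two-sided estimate.

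What is missing is a second use of (g1) in its sharp form $G_+(s)=o(|s|^{2\gg})$ as $s\to 0$ (your argument only invokes the cruder $|G_+(s)|\le C_0|s|^{2\gg}$, which is not enough here). The paper exploits this together with $G_+\ge 0$: for $|u+v|$ small it simply writes $G_+(u+v)-G_+(u)\le G_+(u+v)\le \tfrac{\delta}{2^{2\gg-1}}|u+v|^{2\gg}$, which immediately handles the small-argument regime. If you want to keep your two-sided framework, the fix is to insert a further threshold $\eta_\delta\ll T_\delta$: for $|u|<\eta_\delta$ use $|G_+(u+v)-G_+(u)|\le G_+(u+v)+G_+(u)$ together with (g1) to bound this by $\delta|u|^{2\gg}$; for $\eta_\delta\le|u|<T_\delta$ your $M|v|$ bound works because now $|u|^{2\gg}\ge\eta_\delta^{2\gg}$ is bounded below.
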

\begin{proof}
	First, we show that for every $\delta>0$ there is $c(\delta)>0$ such that
	\begin{equation}\label{eq:1.33}
		|G^\e_-(u+ v) - G^\e_-(u)|\leq \delta |u|^{2\gg} + c(\delta) |v|^{2\gg},\quad u,\;v\in \R.
	\end{equation}
	Fix $\delta>0$ and $u$, $v\in \R$.
	By the mean value theorem, there is $\theta \in (0,1)$ such that
	\[
	\begin{aligned}
		|G^\e_-(u+v) - G^\e_-(u)| &\leq  |\f_\e(u+\theta v)g_-(u+\theta v)||v|\\
		&\leq c(\e)|u+\theta v|^{2\gg-1}|v|\\
		&\leq c_1(\e)|u|^{2\gg-1}|v| + c_1(\e)|v|^{2\gg},
	\end{aligned}
	\] 
	where we used \eqref{eq:1.32}.
	We exploit the Young inequality with $\delta/c_1(\e)$
	\[
	|u|^{2\gg-1}|v| \leq \frac{\delta}{c_1(\e)}|u|^{(2\gg-1)p} + c_2(\delta,\e)|v|^q,\quad \text{where }p =\frac{2\gg}{2\gg-1},\; q = 2\gg,
	\]
	to obtain
	\[
	|G^\e_-(u+v) - G^\e_-(u)| \leq \delta|u|^{2\gg} + c_3(\delta,\e)|v|^{2\gg},
	\]
	what proves the assertion.
	
	Now, we show that for every $\delta>0$ there is $c(\delta)>0$ such that 
	\[
	G_+(u+v) - G_+(u) - \delta|u|^{2\gg} \leq c(\delta)|v|^{2\gg},\qquad u,\;v\in \R.
	\]
	Fix $\delta>0$ and $u$, $v\in \R$.
	By (g1) and (g3), there are $0<\eta <M$ such that 
	\[
	G_+(s)\leq \frac{2}{2^{2\gg}}\delta |s|^{2\gg},
	\]
	if $|s|<\eta$ or $|s|>M$.
	We consider four cases.
	\\	\textit{Case I: $|u+v|<\eta$ or $|u+v|>M$.}\\
	We use the fact that $G_+ \geq 0$ and obtain
	\[
	G_+(u+v) - G_+(u)\leq G_+(u+v)\leq \frac{2}{2^{2\gg}}\delta|u+v|^{2\gg}\leq \delta \le |u|^{2\gg} + |v|^{2\gg} \pr,
	\]
	what proves the assertion.
	\\	\textit{Case II: $\eta \leq|u+v|\leq M$ and $|v|>M$.}\\
	There is $c>0$ such that $G_+(s)\leq c|s|^{2\gg}$, for every $s\in\R$, so
	\[
	G_+(u+v) - G_+(u)\leq G_+(u+v)\leq c|u+v|^{2\gg}\leq cM^{2\gg} \leq c|v|^{2\gg}
	\]	
	and we are done.
	\\	\textit{Case III: $\eta\leq |u+v|\leq M$ and $\eta/2 \leq |v|\leq M$.}\\
	The set $C:= \left\{(u,v)\in\R^2\mid \eta\leq |u+v|\leq M\text{ and } \eta/2 \leq |v|\leq M\right\}$ is compact and the function $h:C\to \R$, given by $h(u,v) := \frac{G_+(u+v) - G_+(u) - \delta|u|^{2\gg}}{|v|^{2\gg}}$, is continuous.
	Thus, there is $c(\delta)>0$ such that $\max_{(u,v)\in C}h(u,v)\leq c(\delta)$ and we are done.
	\\ \textit{Case IV: $\eta\leq |u+v|\leq M$ and $|v| < \eta /2$.}\\
	By the continuity of $g_+$ and by (g0), there is $c(\eta)$ such that 
	\[
	|g_+(s)|\leq c(\eta)|s|^{2\gg-1}, \qquad |s|\geq \frac{\eta}{2}.
	\]
	By the mean value theorem, there is $\theta\in (0,1)$ such that
	\[
	G_+(u+v) - G_+(u) = g_+(u+\theta v)v.
	\]
	Notice that $|u+\theta v| \geq |u + v| - (1-\theta)|v|	>\eta - \eta/2 = \eta/2$, so combining the above we obtain
	\[
	G_+(u+v) - G_+(u) \leq c(\eta)|u + \theta v|^{2\gg-1}|v|.
	\]
	We then proceed as in the first part of the proof.
	
	Finally, we use the above results to deduce
	\[
	\begin{aligned}
		G_\e(u+v) - G_\e(u) &=  G_+(u+v) - G_+(u) - \le G^\e_-(u+v) -G^\e_-(u)\pr \\
		&\leq \delta|u|^{2\gg}+ c(\delta)|v|^{2\gg} + |G^\e_-(u+v) -G^\e_-(u)| \\
		&\leq 2\le \delta|u|^{2\gg}+ c(\delta)|v|^{2\gg}\pr.
	\end{aligned}
	\]
\end{proof}

\begin{lemma}\label{lem:theta}
	Suppose that $(u_n)\subset \cM_\eps$, $J_\eps(u_n)\to c_\eps$ and
	$$u_n\weakto\tu\neq 0\hbox{ in } \D,\;u_n(x)\to\tu(x)\quad\hbox{ for a.e. }x\in\R^N$$ for some  $\tu\in \D$. Then $u_n\to\tu$, $\tu$  is a critical point of $J_\eps$ and $J_\eps(\tu)=c_\eps$.
\end{lemma}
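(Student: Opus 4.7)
The approach is a concentration-compactness argument combining Brezis--Lieb splittings with the Pohožaev projection $m_{\cP_\eps}$, followed by a standard Pohožaev--Nehari identification of critical points. Setting $v_n := u_n - \tilde u$, I have $v_n \rightharpoonup 0$ in $\cD^{2,2}(\R^N)$ and $v_n(x)\to 0$ a.e. The Hilbert Brezis--Lieb identity yields $\|\Delta u_n\|_{L^2}^2 = \|\Delta \tilde u\|_{L^2}^2 + \|\Delta v_n\|_{L^2}^2 + o(1)$. Both $G_+$ (via $(g1)$, $(g3)$) and $G_-^\eps$ (via the $\varphi_\eps$-truncation and \eqref{eq:1.32}) satisfy $|G_\pm^\eps(s)| \leq c|s|^{2^{**}}$, so the classical Brezis--Lieb lemma applied to the Nemytskii operators gives
\[
\int_{\R^N} G_\eps(u_n)\,dx = \int_{\R^N} G_\eps(\tilde u)\,dx + \int_{\R^N} G_\eps(v_n)\,dx + o(1).
\]

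On $\cM_\eps$ the functional reduces to $J_\eps = (\tfrac12 - \tfrac1{2^{**}})\|\Delta \cdot\|_{L^2}^2$, so $\|\Delta u_n\|^2 \to \ell := \tfrac{2\cdot 2^{**}}{2^{**}-2}c_\eps > 0$ and $\int G_\eps(u_n)\to \ell/2^{**}$. The first goal is $\|\Delta v_n\|\to 0$. If $\tilde u \in \cP_\eps$, writing $r := r_\eps(\tilde u)$, the projection bound $c_\eps \leq J_\eps(m_{\cP_\eps}(\tilde u)) = (\tfrac12 - \tfrac1{2^{**}}) r^{4-N}\|\Delta \tilde u\|^2$ combined with the weak lower semicontinuity $\|\Delta \tilde u\|^2 \leq \ell$ and $N>4$ forces $r \leq 1$, i.e.\ $2^{**}\int G_\eps(\tilde u) \leq \|\Delta \tilde u\|^2$; if $\tilde u \notin \cP_\eps$ this inequality is trivial. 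Substituting it into the Brezis--Lieb splitting of the constraint $\|\Delta u_n\|^2 = 2^{**}\int G_\eps(u_n)$ yields $2^{**}\int G_\eps(v_n) \geq \|\Delta v_n\|^2 + o(1)$. Arguing by contradiction, assume $\|\Delta v_n\|^2 \to \mu > 0$; then $v_n \in \cP_\eps$ for large $n$ with $r_\eps(v_n) \geq 1 + o(1)$, so
\[
J_\eps(m_{\cP_\eps}(v_n)) = \bigl(\tfrac12 - \tfrac1{2^{**}}\bigr) r_\eps(v_n)^{4-N}\|\Delta v_n\|^2 \leq \bigl(\tfrac12 - \tfrac1{2^{**}}\bigr)\mu + o(1).
\]
Since $\mu = \ell - \|\Delta \tilde u\|^2 < \ell$ (as $\tilde u \neq 0$), the right-hand side is strictly less than $c_\eps$, contradicting $J_\eps(m_{\cP_\eps}(v_n)) \geq c_\eps$. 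Hence $v_n \to 0$ in $\cD^{2,2}(\R^N)$; in particular $\tilde u \in \cM_\eps$ and $J_\eps(\tilde u) = c_\eps$.

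For the critical point property, I invoke the Lagrange multiplier rule on the constraint $\Phi(u) := \|\Delta u\|^2 - 2^{**}\int G_\eps(u) = 0$, giving $J_\eps'(\tilde u) = \lambda \Phi'(\tilde u)$ for some $\lambda \in \R$. Testing in the infinitesimal dilation direction via the family $t \mapsto \tilde u(e^{-t}\cdot)$: at $t = 0$ the derivative of $J_\eps$ equals $\tfrac{N-4}{2}\|\Delta \tilde u\|^2 - N\int G_\eps(\tilde u)$, which vanishes on $\cM_\eps$; for $\Phi$ it equals $-4\|\Delta \tilde u\|^2 \neq 0$. Hence $\lambda = 0$ and $\tilde u$ is a critical point of $J_\eps$.

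The main obstacle is the dichotomy step in the second paragraph: ruling out a genuine $\cM_\eps$-splitting of the minimizing sequence between $\tilde u$ and the vanishing tail $v_n$. This is resolved precisely by the strict inequality $\mu < \ell$ forced by $\tilde u \neq 0$ and weak lsc, together with the monotonicity of $r \mapsto r^{4-N}$ for $N > 4$, which make the projection $m_{\cP_\eps}$ a sharp enough obstruction to preclude the splitting.
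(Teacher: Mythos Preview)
Your argument is essentially correct but follows a genuinely different route from the paper. The paper first shows that $\tilde u$ is a critical point by a variational--inequality argument carried out \emph{at the level of the sequence}: using Lemma~\ref{lemma:ineq} and Vitali it proves $\int G_\eps(u_n+tv)\to A+\int G_\eps(\tilde u+tv)-\int G_\eps(\tilde u)$, then exploits $J_\eps(m_{\cP_\eps}(u_n+tv))\ge c_\eps$, takes the difference quotient in $t$, and passes to the limit to obtain $\int\Delta\tilde u\,\Delta v\ge\int g_\eps(\tilde u)v$ for all $v\in C_0^\infty$. Only then does it invoke the Poho\v zaev identity (Theorem~\ref{th:Poho}) to place $\tilde u$ on $\cM_\eps$, after which strong convergence follows from $\|\Delta u_n\|\to\|\Delta\tilde u\|$. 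You reverse the order: a Brezis--Lieb splitting plus a dichotomy on the tail $v_n$ gives strong convergence first, whence $\tilde u\in\cM_\eps$ and $J_\eps(\tilde u)=c_\eps$ are immediate, and the Euler--Lagrange equation comes last via constrained minimality. Your route is closer to a textbook concentration--compactness template and avoids Theorem~\ref{th:Poho} altogether; the paper's route avoids Brezis--Lieb for $G_\eps$ and never needs to analyse the tail $v_n$.

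Two points deserve tightening. First, the pointwise bound $|G_\eps(s)|\le c|s|^{2^{**}}$ is \emph{not} the hypothesis that yields the nonlinear Brezis--Lieb splitting; what is needed is precisely the incremental estimate $|G_\eps(a+b)-G_\eps(a)|\le\delta|a|^{2^{**}}+c_\delta|b|^{2^{**}}$ of Lemma~\ref{lemma:ineq}, so you should cite that. Second, and more substantively, under the standing assumptions $J_\eps$ and $\Phi$ are not known to be $C^1$ on $\D$: the paper only asserts that $J_\eps'(u)(v)$ exists for $v\in C_0^\infty(\R^N)$, because $g_+$ need not obey $|g_+(s)|\le C|s|^{2^{**}-1}$ near $0$. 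Hence you cannot literally ``invoke the Lagrange multiplier rule'' to obtain $J_\eps'(\tilde u)=\lambda\Phi'(\tilde u)$ as functionals, nor test them against the dilation direction $-x\cdot\nabla\tilde u$, which is not obviously in $\D$. The remedy is the computation you already sketch, done by hand: for fixed $v\in C_0^\infty$ work in the two--parameter family $(s,t)\mapsto(\tilde u+sv)(e^{-t}\cdot)$, where $J_\eps$ and $\Phi$ become $C^1$ \emph{scalar} functions of $(s,t)$; your transversality $\partial_t\Phi|_{(0,0)}=-4\|\Delta\tilde u\|^2\neq0$ then lets the scalar implicit function theorem produce a curve in $\cM_\eps$ through $\tilde u$, and differentiating $J_\eps$ along it at $s=0$ yields $\int\Delta\tilde u\,\Delta v=\int g_\eps(\tilde u)v$ directly, with no multiplier needed.
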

\begin{proof}
	It follows, by Lemma \ref{lemma:ineq}, that for every $\delta>0$ theres is $c(\delta)>0$ such that 
	\[
	|G_\e (u+v) - G_\e(u)| \leq \delta |u|^{2\gg} + c(\delta)|v|^{2\gg},\quad u,\; v \in \R.
	\]
Thus taking any $v\in \cC_0^{\infty}(\R^N)$ and $t\in\R$ we observe that $(G_\eps(u_n+tv)-G_\eps(u_n))$ is uniformly integrable and tight. In view of Vitali's convergence theorem we have
	\[
	\lim_{n\to \infty} \int _{\R^N} G_\e(u_n + t v ) - G_\e (u_n)\d x = \int _{\R^N} G_\e(\tu + t v ) - G_\e (\tu)\d x.
	\]	
	Since each $u_n\in \cM_\e$, we get
	\[
	c_\e \leftarrow J_\e (u_n) = \frac{1}{2}\int_{\R^N}|\Delta u_n|^2 \d x- \int_{\R^N}G_\e(u_n)\d x = \le \frac{2\gg}{2}-1\pr\int_{\R^N}G_\e(u_n)\d x,
	\]
	so 
	\begin{equation}\label{eq:1.34}
		A:=\lim_{n\to \infty}\int_{\R^N} G_\e(u_n)\d x = \frac{1}{2\gg}\le \frac{1}{2}-\frac{1}{2\gg} \pr^{-1} c_\e>0.
	\end{equation}
	Combining the above we have
	\begin{equation}\label{eq:1.37}
		\begin{aligned}
			\lim_{n\to\infty}\int_{\R^N}G_\eps(u_n+tv)\,dx &=\lim_{n\to\infty}\int_{\R^N}G_\eps(u_n)\,dx+\int_{\R^N}G_\eps(\tu+tv)\,dx-\int_{\R^N}G_\eps(\tu)\,dx\\
			& = A +\int_{\R^N}G_\eps(\tu+tv)\,dx-\int_{\R^N}G_\eps(\tu)\,dx.
		\end{aligned}	
	\end{equation}
	By \eqref{eq:1.34} and Lemma \ref{lemma:ineq}, $u_n + t v \in \cP_\e$ for sufficiently large $n$ and sufficiently small $|t|$.
	Thus and by \eqref{eq:1.37}, for sufficiently small $|t|$, we have
	\[
	\begin{aligned}
		\lim_{n\to\infty}\frac{1}{t} \le \le \int_{\R^N}G_\e(u_n + tv)\d x\pr ^\frac{N-4}{N} - \le \int_{\R^N}G_\e(u_n)\d x\pr^\frac{N-4}{N} \pr \\
		= \frac{1}{t}\le \le A +\int_{\R^N}G_\e(\tu + tv)\d x - \int_{\R^N}G_\e(\tu)\d x\pr^\frac{N-4}{N} - A^\frac{N-4}{N}\pr .
	\end{aligned}
	\]
	and, consequently, by the Lebesgue dominated convergence theorem
	\begin{equation}\label{eq:thetain1}
			\lim_{t\to 0}\frac{1}{t}\le \le A +\int_{\R^N}G_\e(\tu + tv)\d x - \int_{\R^N}G_\e(\tu)\d x\pr^\frac{N-4}{N} - A^\frac{N-4}{N}\pr = \frac{N-4}{N}A^\frac{-4}{N}\int_{\R^N}g_\e(\tu)v\d x,
	\end{equation}
	where $g_\e := G_\e^\prime = g_+ - \f_\e g_-$.
	
	If $u_n+tv\in\cP_\e$, then $J_\eps(m_{\cP_\e}(u_n+tv))\geq c_\eps$, so 
	$$r_\e(u_n+tv)^{4-N}\le\frac{1}{2}-\frac{1}{2\gg}\pr \int_{\R^N}|\Delta (u_n+tv)|^2\, dx\geq c_\eps.$$
	Raising both sides to the $4/N$-power yields
	\begin{equation}\label{eq:1.35}
		\le\frac{1}{2}-\frac{1}{2\gg}\pr^{\frac4N}
		\int_{\R^N}|\Delta (u_n+tv)|^2\, dx\geq	c_\eps^{\frac4N}\le 2\gg\int_{\R^N}G_\eps(u_n+tv)\, dx\pr ^{\frac{N-4}{N}}.
	\end{equation}
	Assumptions $u_n\in\cM_\e$ and $J_\e(u_n)\to c_\e$ imply that 
	\begin{equation}\label{eq:1.36}
		\begin{aligned}
			\int_{\R^N} |\Delta u_n|^2\d x \to c_\e\le \frac{1}{2}-\frac{1}{2\gg}\pr^{-1}.
		\end{aligned}
	\end{equation}
	For all $n$ and $t$, we have
	\[
	\int_{\R^N}\Delta u_n \Delta v\d x+\frac{t}{2}\int_{\R^N}|\Delta v|^2\d x = \frac{1}{2t}\le \int_{\R^N}|\Delta (u_n+tv)|^2\d x -\int_{\R^N} |\Delta u_n|^2\d x\pr.
	\]
	Hence, by \eqref{eq:1.35} and since $u_n\in \cM_\e$, for $t>0$,
	\[
	\begin{aligned}
		&\int_{\R^N}\Delta u_n \Delta v\d x+\frac{t}{2}\int_{\R^N}|\Delta v|^2\d x \geq \\
		& \frac{1}{2t}\le c_\e^\frac{4}{N}\le \frac{1}{2}-\frac{1}{2\gg}\pr^\frac{-4}{N}\le 2\gg\int_{\R^N}G_\e(u_n+tv)\d x \pr^\frac{N-4}{N}  -\le 2\gg\int_{\R^N}G_\e(u_n)\d x\pr ^\frac{N-4}{N}\le \int_{\R^N}|\Delta u_n|^2 \d x\pr^\frac{4}{N}\pr .
	\end{aligned}
	\]
	Letting $n\to \infty$, by \eqref{eq:1.37}, \eqref{eq:1.34} and \eqref{eq:1.36}, we deduce that, for sufficiently small $t>0$,
	\[
	\begin{aligned}
		&\int_{\R^N}\Delta \tu\Delta v\d x+\frac{t}{2}\int_{\R^N}|\Delta v|^2\d x \\ 
		&\qquad \geq\frac{1}{2t}c_\e^\frac{4}{N}\le \frac{1}{2}-\frac{1}{2\gg}\pr^\frac{-4}{N}\le \le2\gg\le A + \int_{\R^N}G_\e(\tu +tv)\d x - \int _{\R^N}G_\e(\tu)\d x\pr \pr^\frac{N-4}{N} - \le 2\gg A\pr ^\frac{N-4}{N}\pr\\
		&\qquad =\frac{ 2\gg}{2}A^\frac{4}{N}\frac{1}{t}\le\le A + \int_{\R^N}G_\e(\tu +tv)\d x - \int _{\R^N}G_\e(\tu)\d x\pr^\frac{N-4}{N} -   A ^\frac{N-4}{N} \pr.
	\end{aligned}
	\]	
	We pass to the limit as $t\to 0^+$ and use \eqref{eq:thetain1} to get
	\[
	\int_{\R^N}\Delta\tu \Delta v\d x \geq \frac{2\gg}{2}\frac{N-4}{N}\int_{\R^N}g_\e(\tu)v\d x =\int_{\R^N}g_\e(\tu)v\d x.
	\]
	Since $v\in C^\infty_0(\R^N)$ was arbitrary we infer that $\tu$ is a critical point of $J_\e$.
	We use the Poho\v{z}aev identity Theorem \ref{th:Poho} to the equation $\Delta^2 u  = g_\e(u)$ with $G_\e\in L^1(\R^N)$, to deduce that $\tu\in \cM_\e$, what leads to
	\[
	c_\e \leq J_\e(\tu) = \le \frac{1}{2}-\frac{1}{2\gg}\pr\int_{\R^N}|\Delta \tu|^2 \d x\leq \liminf_{n\to\infty}\le \frac{1}{2}-\frac{1}{2\gg}\pr\int_{\R^N}|\Delta u_n|^2 \d x= \lim_{n\to\infty}J_\e(u_n) = c_\e,
	\]
	where the weak l.s.c of the norm was used.
	Thus, $J_\e(\tu) = c_\e$ and $\|u_n\|\to\|\tu\|$, so $u_n\to \tu$ in $\D$.
\end{proof}

\begin{proof}[Proof of Theorem \ref{thm:2}]
	Take a minimizing sequence $(u_n)$ in $\cM_\e$ of $J_\e$, i.e., $J_\e(u_n)\to c_\e$.
	Since $u_n\in \cM_\e$, $n\geq 1$, we have 
	\[
	J_\e(u_n) = \le \frac{1}{2}-\frac{1}{2\gg}\pr\int_{\R^N}|\Delta u_n|^2\d x \to c_\e,
	\]
	and so $(u_n)$ is bounded in $\D$.
	Moreover, we have 
	\[
	2\gg \int_{\R^N}G _+(u_n)\d x \geq \int_{\R^N}|\Delta u_n|^2\d x\to \le \frac{1}{2}-\frac{1}{2\gg}\pr c_\e.
	\]
	By the assumption $G_+$ satisfies \eqref{eq:Psi}, so \eqref{eq:LionsCond11} is not satisfied.
	Passing to a subsequence, we may choose $(y_n)$ in $\R^N$ and $0\neq u_\e \in \D$ such that
	\[
	u_n(\cdot + y_n) \rightharpoonup u_\e \quad\text{in }\D,\quad u_n(x+y_n) \to u_\e(x) \quad \text{for a.e. }x\in \R^N,
	\]
	as $n\to \infty$.
	In view of Lemma \ref{lem:theta}, $u_\e\in \cM_\e$ is a critical point of $J_\e$ at the level $c_\e$.
	
	Choose $\e_n\to 0^+$.
	Fix an arbitrary $u\in \cM$.
	Since $G_\e(s) \geq G(s)$, for all $s\in \R$ and $\e \in (0,1)$, we deduce that
	\[
	\int_{\R^N}G_{\e_n}(u)\d x \geq\int_{\R^N}G(u)\d x = \frac{1}{2\gg}\int_{\R^N}|\Delta u|^2 \d x >0,
	\]
	so $m_{\cP_{\e_n}}(u)\in \cM_{\e_n}$ is well-defined.
	We have 
	\[
	\begin{aligned}
		J_{\e_n}(u_{\e_n}) &\leq J_{\e_n}(m_{\cP_{\e_n}}(u)) = \le \frac{1}{2}-\frac{1}{2\gg}\pr \underbrace{\le\frac{2\gg\int_{\R^N}G_{\e_n}(u)\d x}{\int_{\R^N}|\Delta u|^2 \d x}\pr^\frac{4-N}{4}}_{r_{\e_n}(u)^{4-N}}\int_{\R^N}|\Delta u|^2\d x\\
		&=\le\frac{1}{2} - \frac{1}{2\gg} \pr\le \int_{\R^N}|\Delta u|^2\d x\pr^\frac{N}{4}\le 2\gg\int_{\R^N}G_{\e_n}(u)\d x \pr ^{-\frac{N-4}{4}}\\
		&\leq \le\frac{1}{2} - \frac{1}{2\gg} \pr\le \int_{\R^N}|\Delta u|^2\d x\pr^\frac{N}{4}\underbrace{\le 2\gg\int_{\R^N}G(u)\d x \pr ^{-\frac{N-4}{4}}}_{=\le \int_{\R^N}|\Delta u |^2\d x \pr ^{-\frac{N-4}{4}}}\\
		&= \le\frac{1}{2} - \frac{1}{2\gg} \pr \int_{\R^N}|\Delta u|^2\d x = J(u).
	\end{aligned}
	\]
	Thus $J_{\e_n} ( u_{\e_n})  \leq \inf_\cM J$ and 
	\begin{equation}\label{eq:1.38}
		\int_{\R^N}|\Delta u_{\e_n}|^2 \d x \leq \le \frac{1}{2}- \frac{1}{2\gg}\pr^{-1} \inf_\cM J,\qquad\text{for every }n.
	\end{equation}
	We have $G_\e(s) \leq G_{1/2}(s)$, for all $s\in \R$ and $\e\in (0,1/2)$, so 
	\[
	\int_{\R^N} G_{1/2}(u_\e)\d x \geq \int_{\R^N} G_\e(u_\e)\d x = \frac{1}{2\gg}\int_{\R^N} |\Delta u_\e|^2\d x> 0\implies u_\e \in \cP_{1/2},
	\]
	and some calculations yield
	\[
	J_\e(u_\e) \geq J_{1/2}(m_{\cP_{1/2}}(u_\e))\geq J_{1/2}(u_{1/2}).
	\]
	Therefore, we get
	\[
	2\gg \int_{\R^N}G_+(u_{\e_n})\d x \geq \int_{\R^N}|\Delta u_{\e_n}|^2\d x  = \le \frac{1}{2}-\frac{1}{2\gg}\pr ^{-1}J_{\e_n}(u_{\e_n})\geq \le \frac{1}{2}-\frac{1}{2\gg}\pr ^{-1} J_{1/2}(u_{1/2}) >0.
	\]
	By \eqref{eq:1.38}, $(u_{\e_n})$ is bounded in $\D$ and $\int_{\R^N} G_+(u_{\e_n})\d x>c>0$, for some constant $c$.
	In view of Lemma \ref{lem:Lions}, \eqref{eq:LionsCond11} is not satisfied.
	Passing to a subsequence, there is $(y_n)$ in $\R^N$ such that $u_{\e_n}(\cdot + y_n)\rightharpoonup u_0\neq 0$ and $u_{\e_n}(x+y_n) \to u_0(x)$ a.e. in $\R^N$.
	We write $\tu_n:= u_{\e_n}(\cdot+y_n)$ for short.
	Since $g_-$ is continuous and $g_-(0)=0$, one may check that, for every $v\in C^\infty_0(\R^N)$,
	\[
	\left | \frac{1}{\e_n^{2\gg-1}}|\tu_n|^{2\gg-1}\chi_{\left\{|\tu_n|\leq \e_n\right\}} g_-(\tu_n)v \right|\leq \left|\chi_{\left\{|\tu_n|\leq \e_n\right\}} g_-(\tu_n)v  \right|  \to 0 \quad\text{a.e. in }\R^N
	\] 
	and 
	\[
	\left | \chi_{\left\{|\tu_n|> \e_n\right\}} g_-(\tu_n)v - g_-(u_0)v  \right| \to 0 \quad\text{a.e. in }\R^N.
	\]
	Due to the estimate $|g_-(\tu_n)v|\leq c\le 1 + |\tu_n|^{2\gg-1}\pr |v|$, the family $\{g_-(\tu_n)v\}$ is uniformly integrable (and tight because of the compact support).
	In view of Vitali's convergence theorem
	\[
	\begin{aligned}
		&\int_{\R^N} \left | \f_{\e_n}(\tu_n)g_-(\tu_n)v - g_-(u_0)v\right | \d x \\
		&\quad \leq \int_{\R^N}\left | \frac{1}{\e_n^{2\gg-1}}|\tu_n|^{2\gg-1}\chi_{\left\{|\tu_n|\leq \e_n\right\}} g_-(\tu_n)v \right|\d x + \int_{\R^N}\left | \chi_{\left\{|\tu_n|> \e_n\right\}} g_-(\tu_n)v - g_-(u_0)v  \right|\d x\to 0,
	\end{aligned}
	\]
	as $n\to \infty$.
	Similarly, we obtain
	\[
	\int_{\R^N} g_+(\tu_n)v \d x \to \int_{\R^N} g_+(u_0)v \d x.
	\]
	Gathering the above we deduce that
	\[
	\begin{aligned}
		J_{\e_n}^\prime(\tu_n)(v) &= \int_{\R^N}\Delta \tu_n \Delta v \d x - \int_{\R^N}g_+(\tu_n)v \d x + \int_{\R^N}\f_{\e_n}(\tu_n) g_-(\tu_n) v\d x\\
		&\to \int_{\R^N}\Delta u_0 \Delta v \d x - \int_{\R^N}g_+(u_0)v\d x + \int_{\R^N} g_-(u_0)v\d x.
	\end{aligned}
	\]
	Each $\tu_n$ is  a critical point of $J_{\e_n}$, since so is $u_{\e_n}$ (translation invariance), hence
	\[
	\int_{\R^N}\Delta u_0 \Delta v \d x =  \int_{\R^N}g(u_0)v\d x,
	\]
	i.e., $u_0$ is a weak solution to \eqref{eq}.
	By Lebesgue's dominated convergence theorem one may show that
	\[
	G^{\e_n}_-(\tu_n) \to G_-(u_0) \quad \text{a.e. in }\R^N,
	\]
	as $n\to\infty$, and, on the other hand, 
	\[
	2\gg \int_{\R^N}G^{\e_n}_-(\tu_n)\d x = 2\gg \int_{\R^N}G_+(\tu_n)\d x - \int_{\R^N}|\Delta \tu_n|^2\d x\leq c\le \sup_{n\geq 1}\|\tu_n\|_{\D}\pr <\infty,
	\]
	where we used the fact that $\tu_n\in \cM_{\e_n}$, \eqref{eq:1.39} and \eqref{eq:1.38}.
	By Fatou's lemma and by the above
	\[
	\int_{\R^N}G_-(u_0)\d x\leq \liminf_{n\to\infty}\int_{\R^N}G^{\e_n}_-(\tu_n)\d x < \infty,
	\]
	namely, we have shown that $G_-(u_0)\in L^1(\R^N)$.
	By the Poho\v{z}aev identity, we infer that $u_0\in \cM$.
	Lastly, we show that $J(u_0) = \inf_\cM J$.
	We use the weak l.s.c. of the norm and \eqref{eq:1.38} to find that
	\[
	\begin{aligned}
		J(u_0) &= \le \frac{1}{2}- \frac{1}{2\gg}\pr\int_{\R^N}|\Delta u_0|^2\d x\leq \liminf_{n\to\infty} \le \frac{1}{2}- \frac{1}{2\gg}\pr\int_{\R^N}|\Delta \tu_n|^2\d x\\
		&= \liminf_{n\to\infty} \le \frac{1}{2}- \frac{1}{2\gg}\pr\int_{\R^N}|\Delta u_{\e_n}|^2\d x\leq \inf_\cM J.
	\end{aligned}
	\]
\end{proof}

\section{Biharmonic logarithmic inequality}\label{sec:BihLog}

\begin{lemma}\label{lem:ineq}
	If $u\in \D$ and $\int_{\R^N} |u|^2 \d x = 1$, then
	\[
	\int_{\R^N}|\nabla u|^2 \d x<\le \int_{\R^N} |\Delta u |^2 \d x\pr^{1/2}.
	\]
\end{lemma}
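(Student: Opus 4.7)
The plan is to reduce the inequality to a Cauchy--Schwarz estimate performed on the Fourier side. Since $u\in\D$ we have $\Delta u\in L^2(\R^N)$, and the normalization $\int_{\R^N}u^2\,dx=1$ places $u$ in $L^2(\R^N)$. Consequently $\widehat u\in L^2(\R^N)$ and $|\xi|^2\widehat u\in L^2(\R^N)$, and one application of Cauchy--Schwarz shows that $|\xi|\widehat u\in L^2(\R^N)$ as well, so that $\nabla u\in L^2(\R^N)$ is well-defined. Plancherel's theorem then provides the three identities
\[
\int_{\R^N}u^2\,dx=\int_{\R^N}|\widehat u|^2\,d\xi,\qquad \int_{\R^N}|\nabla u|^2\,dx=\int_{\R^N}|\xi|^2|\widehat u|^2\,d\xi,\qquad \int_{\R^N}|\Delta u|^2\,dx=\int_{\R^N}|\xi|^4|\widehat u|^2\,d\xi.
\]

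The main computation is Cauchy--Schwarz applied to the factorization $|\xi|^2|\widehat u(\xi)|^2=|\widehat u(\xi)|\cdot|\xi|^2|\widehat u(\xi)|$:
\[
\int_{\R^N}|\xi|^2|\widehat u|^2\,d\xi\leq\le\int_{\R^N}|\widehat u|^2\,d\xi\pr^{1/2}\le\int_{\R^N}|\xi|^4|\widehat u|^2\,d\xi\pr^{1/2}.
\]
Combined with $\|u\|_{L^2}=1$, this immediately yields the non-strict version $\int_{\R^N}|\nabla u|^2\,dx\leq\bigl(\int_{\R^N}|\Delta u|^2\,dx\bigr)^{1/2}$.

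For the strict inequality I would invoke the equality case of Cauchy--Schwarz: equality would force $|\widehat u(\xi)|$ and $|\xi|^2|\widehat u(\xi)|$ to be proportional as elements of $L^2(\R^N)$, so $|\xi|^2$ would equal a constant almost everywhere on the essential support of $\widehat u$. That support would then be contained in a single sphere, which has Lebesgue measure zero, forcing $\widehat u\equiv 0$ and hence $u=0$, contradicting the normalization $\int_{\R^N}u^2\,dx=1$.

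The only subtlety worth flagging is the definition of $\D$: it is the completion of $C_0^\infty(\R^N)$ under $\|\Delta\cdot\|_{L^2}$, so its elements need not belong to $L^2(\R^N)$ in general, and the embedding $\D\hookrightarrow L^{2^{**}}$ is of no help for such a statement. The hypothesis $\int u^2\,dx=1$ is precisely what places $u$ in $L^2$ and thus makes Plancherel applicable; once that is in place, the rest of the argument is entirely routine.
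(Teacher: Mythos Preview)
Your argument is correct and is essentially identical to the paper's own proof: both pass to the Fourier side, use Plancherel to convert the three integrals into weighted $L^2$-norms of $\widehat u$, apply Cauchy--Schwarz to $|\xi|^2|\widehat u|^2 = |\widehat u|\cdot|\xi|^2|\widehat u|$, and then rule out equality by noting it would force $|\xi|^2\widehat u = \lambda\widehat u$ and hence $\widehat u = 0$. Your discussion of why the hypothesis $\int u^2\,dx=1$ is needed to place $u$ in $H^2$ is slightly more explicit than the paper's, but the overall route is the same.
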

\begin{proof}
We rely on ideas from \cite{Bellazzini}.
Let us define the Fourier transform $\widehat{u}$ of $u$ (whenever possible) as
\[
\widehat{u}(\xi) = \frac{1}{\le 2\pi\pr ^{N/2}}\int_{\R^N} e^{-ix\cdot \xi} u(x)\d x,\quad \xi \in \R^N.
\]
If $u\in \D$ and $\int_{\R^N} |u|^2 \d x = 1$, then $u\in H^2(\R^N)$ and by the Plancharel theorem
\[
\begin{aligned}
\|u\|_{L^2(\R^N)} &= \|\widehat{u}\|_{L^2(\R^N)}  ,\\
\|\nabla u\|_{L^2(\R^N)}  &= \|\widehat{\nabla u}\|_{L^2(\R^N)} =  \|\xi\widehat{u}\|_{L^2(\R^N)},\\
\|\Delta u \|_{L^2(\R^N)} &= \|\widehat{\Delta u }\|_{L^2(\R^N)} = \||\xi|^2\widehat{u}\|_{L^2(\R^N)}.
\end{aligned}
\]
By the Cauchy--Schwartz inequality we get
\[
\le\int_{\R^N}| \xi\widehat{u}(\xi)|^2\,d\xi\pr^{1/2}\leq 
	\le\int_{\R^N}| |\xi|^2\widehat{u}(\xi)|^2\,d\xi\pr^{1/4}\le\int_{\R^N}| \widehat{u}(\xi)|^2\,d\xi\pr^{1/4}.
\]
and the assertion follows with the non-strict inequality.
Recall that the equality in the Cauchy--Schwartz inequality holds if and only if $|\xi|^2\widehat{u}(\xi)=\lambda\widehat{u}(\xi)$ for some $\lambda$, what implies $\widehat{u}=0$. Hence the inequality in the statement is in fact strict.
\end{proof}

\begin{altproof}{Theorem \ref{th:BihLog}}
Observe that the following inequality holds
\begin{equation}\label{eq:ineq}
	\Big(\int_{\R^N}|\Delta u|^2\, dx\Big)^{\frac{N}{N-4}}\geq C_{N,log}\int_{\R^N}|u|^2\log |u|\, dx,\quad\hbox{for any }u\in\D,
\end{equation}
where 
$$C_{N,log}=2\gg\Big(\frac12-\frac{1}{2\gg}\Big)^{-\frac{4}{N-4}}(\inf_{\cM} J)^{\frac{4}{N-4}}
.
$$
Indeed, it is enough to consider $u\in \D$ such that $\int_{\R^N}|u|^2\log |u|\, dx>0$.
We then obtain $u(r\cdot)\in \cM$, where 
$$r:=\le\frac{ 2\gg\int_{\R^N}|u|^2\log |u|\d x}{\int_{\R^N}|\Delta u|^2}\pr ^{1/4}.$$
Hence $J(u(r\cdot))\geq \inf_{\cM} J$ and we get \eqref{eq:ineq}.

Now note that
\eqref{eq:ineq}  is equivalent to
\begin{equation}\label{eq:scaled}
	\Big(\int_{\R^N}|\Delta u|^2\,dx\Big)^{\frac{N}{N-4}}\geq C_{N,log}\max_{\alpha\in\R}\Big\{ e^{-\alpha2\gg}\int_{\R^N}|e^{\alpha}u|^2\log |e^{\alpha}u|\,dx\Big\},\quad\hbox{ for } u\in\cD^{2,2}(\R^N).
\end{equation}
Assuming that $\int_{\R^N}|u|^2\, dx=1$,
the maximum of the right hand side of \eqref{eq:scaled} is attained at $\alpha=\frac{N-4}{8}-\int_{\R^N}|u|^2\log |u|\,dx$. Hence we get
\begin{eqnarray*}
	\frac{N}{N-4}\log \Big(\int_{\R^N}|\Delta u|^2\,dx\Big)\geq \log(C_{N,log})-\alpha 2\gg+2\alpha+\log\Big(\frac{N-4}{8}\Big)
\end{eqnarray*}
that is
\begin{eqnarray*}
	\frac{N}{N-4}\log \Big(\int_{\R^N}|\Delta u|^2\,dx\Big)\geq \log\Big(C_{N,log}\frac{N-4}{8}e^{-1}\Big)+\frac{8}{N-4}\int_{\R^N}|u|^2\log |u|\,dx
\end{eqnarray*}
and
\begin{eqnarray*}
	\frac{N}{8}\log \Big(\int_{\R^N}|\Delta u|^2\,dx\Big)\geq \frac{N-4}{8} \log\Big(C_{N,log}\frac{N-4}{8}e^{-1}\Big)+\int_{\R^N}|u|^2\log |u|\,dx
\end{eqnarray*}
thus \eqref{logSob} holds.

We show that the constant in \eqref{logSob} is optimal, i.e., there is $u\in \D$ such that the equality holds.
First of all, notice that if $u_0$ is a minimizer given by Theorem \ref{thm:2}, then for $u_0$ we have the equality in \eqref{eq:ineq}:
\begin{equation}\label{eq:6.1}
\le\int_{\R^N}|\Delta u_0|^2\, dx\pr^{\frac{N}{N-4}}= C_{N,log}\int_{\R^N}|u_0|^2\log |u_0|\, dx.
\end{equation}
We use \eqref{eq:ineq} for the family of functions $\frac{e^\alpha}{\|u_0\|_{L^2}}u_0\in \D$, $\alpha\in \R$, to get 
\begin{equation}\label{ineq:1}
\le\int_{\R^N}|\Delta u_0|^2\, dx\pr^{\frac{N}{N-4}}\geq  C_{N,log}\|u_0\|_{L^2}^{2\gg-2}e^{(2-2\gg)\alpha}\int_{\R^N}|u_0|^2\log \left |\frac{e^{\alpha}}{\|u_0\|_{L^2}} u_0\right|\,dx,\quad \alpha\in \R.
\end{equation}
Now let us consider the function $f:\R\to\R$ given by
\[
\begin{aligned}
f(\alpha)&:= C_{N,log}\|u_0\|_{L^2}^{2\gg-2}e^{(2-2\gg)\alpha}\int_{\R^N}|u_0|^2\log \left |\frac{e^{\alpha}}{\|u_0\|_{L^2}} u_0\right|\,dx - \le\int_{\R^N}|\Delta u_0|^2\, dx\pr^{\frac{N}{N-4}}
\end{aligned}
\]
Note that
\[
f^\prime(\alpha) = 0 \iff
\alpha = \frac{N-4}{8} - \int_{\R^N} \left |\frac{u_0}{\|u_0\|_{L^2}}\right|^2\log\left |\frac{u_0}{\|u_0\|_{L^2}}\right| \d x.
\]
On the other hand, $f$ attains maximum at $\alpha = \log(\|u_0\|_{L^2})$ in view of \eqref{ineq:1} and \eqref{eq:6.1}, thus
\[
\int_{\R^N} \left |\frac{u_0}{\|u_0\|_{L^2}}\right|^2\log\left |\frac{u_0}{\|u_0\|_{L^2}}\right| \d x = \frac{N-4}{8} - \log(\|u_0\|_{L^2})
\]
or, equivalently, 
\[
\frac{1}{\|u_0\|_{L^2}^2}\int_{\R^N} |\Delta u_0 |^2\d x = \frac{N}{4},
\]
where we used the fact that $u_0\in \cM$.
Therefore we obtain the equality in \eqref{logSob} for the function $\frac{u_0}{\|u_0\|_{L^2}}$.

Let us now suppose that
$$\frac{N}{8}\log \le \le \frac{8e}{C_{N,log}(N-4)}\pr^{(N-4)/N}\int_{\R^N}|\Delta u|^2\,dx \pr=\int_{\R^N}|u|^2\log |u|\,dx$$
for some $u\in\D$ such that $\|u\|_{L^2(\R^N)}=1$. Then
$$\Big(\int_{\R^N}|\Delta u|^2\,dx\Big)^{\frac{N}{N-4}}=C_{N,log} e^{-\alpha2\gg}\int_{\R^N}|e^{\alpha}u|^2\log |e^{\alpha}u|\,dx$$
for $\alpha=\frac{N-4}{8}-\int_{\R^N}|u|^2\log |u|\,dx$ and the equality in \eqref{eq:ineq} holds for $u_1:=e^{\alpha}u$. Hence $J(u_0)=\inf_{\cM}J$ for 
$$u_0:=u_1(r\cdot)\in \cM, \quad \text{where } r = \le \frac{2\gg\int_{\R^N}|u_1|^2\log|u_1|\d x}{\int_{\R^N} |\Delta u_1|^2\d x}\pr^{1/4}. $$
Let us sketch the proof that $u_0$ is a critical point of $J$.
Firstly, note that, for every $v\in C^\infty_0(\R^N)$, $G(u_0 +v)\in L^1(\R^N)$, for $G(s) := s^2\log|s|$.
Fix an arbitrary $v\in C^\infty_0(\R^N)$.
We use the fact that $G$ is $C^1$-smooth  and the Lebesgue dominated convergence theorem to get
\[
\lim_{t\to 0} \frac{1}{t}\le\int_{\R^N} G(u_0+tv)\d x - \int_{\R^N} G(u_0)\d x\pr = \int_{\R^N} g(u_0)v\d x.
\] 
By the continuity, $\int_{\R^N}G(u_0 +tv)\d x >0$, for sufficiently small $|t|>0$, so $(u_0+tv)(r\cdot)\in \cM$, where
\[
r =  \le \frac{2\gg\int_{\R^N}G(u_0+tv)\d x}{\int_{\R^N}|\Delta(u_0 +tv)|^2\d x}\pr ^{1/4}.
\]
Hence
\[
J((u_0+tv)(r\cdot))\geq \inf_{\cM}J = J(u_0)
\]
or, equivalently,
\[
\le \frac{1}{2}-\frac{1}{2\gg}\pr^{4/N} \int_{\R^N} |\Delta( u_0+tv)|^2\d x \geq J(u_0)^{4/N} \le 2\gg \int_{\R^N}G(u_0+tv)\d x\pr^{(N-4)/N}.
\]
We then proceed similarly as in the last part of the proof of Lemma \ref{lem:theta} to conclude that
\[
\int_{\R^N} \Delta u_0 \Delta v \d x \geq \int_{\R^N} g(u_0) v \d x,
\]
which yields that $u_0$ is a critical point of $J$.

Finally, we show the estimate of the constant $C_{N,log}$ from Theorem \ref{logSob}.
Observe that if  $u\in \D$ and $\int_{\R^N} |u|^2 \d x = 1$, then $u\in H^2(\R^N)$.
In view of Lemma \ref{lem:ineq} and the logarithmic Sobolev inequality \eqref{eq:ineqLogSob} we obtain
\[
\int_{\R^N} |u|^2\log(|u|)\d x < \frac{N}{4}\log \le \frac{2}{\pi e N}\le \int_{\R^N} |\Delta u |^2\d x\pr ^{1/2} \pr =  \frac{N}{8}\log \le \Big(\frac{2}{\pi e N}\Big)^2 \int_{\R^N} |\Delta u |^2\d x \pr,
\]
and so
$$\le \frac{8e}{C_{N,log}(N-4)}\pr^{(N-4)/N} < \Big(\frac{2}{\pi e N}\Big)^2.$$

\end{altproof}

\section*{Acknowledgements}

The authors were supported by the National Science Centre, Poland (Grant No. 2017/26/E/ST1/00817).
J. Mederski was also partially supported by the Deutsche Forschungsgemeinschaft (DFG, German Research Foundation) – Project-ID 258734477 – SFB 1173 during the stay at Karlsruhe Institute of Technology. 

\begin{bibdiv}

	\begin{biblist}
	
\bib{Adams}{book}{
   author={Adams, Robert A.},
   title={Sobolev spaces},
   note={Pure and Applied Mathematics, Vol. 65},
   publisher={Academic Press [A subsidiary of Harcourt Brace Jovanovich,
   Publishers], New York-London},
   date={1975},
   pages={xviii+268},
}

\bib{Agmon_et_al}{article}{
   author={Agmon, S.},
   author={Douglis, A.},
   author={Nirenberg, L.},
   title={Estimates near the boundary for solutions of elliptic partial
   differential equations satisfying general boundary conditions. I},
   journal={Comm. Pure Appl. Math.},
   volume={12},
   date={1959},
   pages={623--727},
   issn={0010-3640},
}

\bib{Antman}{book}{
	author={Antman, Stuart S.},
	title={Nonlinear problems of elasticity},
	series={Applied Mathematical Sciences},
	volume={107},
	edition={2},
	publisher={Springer, New York},
	date={2005},
	pages={xviii+831},
	isbn={0-387-20880-1},
}

\bib{Bellazzini}{article}{
	author={Bellazzini, Jacopo},
	author={Frank, Rupert L.},
	author={Visciglia, Nicola},
	title={Maximizers for Gagliardo-Nirenberg inequalities and related
		non-local problems},
	journal={Math. Ann.},
	volume={360},
	date={2014},
	number={3-4},
	pages={653--673},
	issn={0025-5831},
}

\bib{BerestyckiLions}{article}{
	author={Berestycki, H.},
	author={Lions, P.-L.},
	title={Nonlinear scalar field equations. I. Existence of a ground state},
	journal={Arch. Rational Mech. Anal.},
	volume={82},
	date={1983},
	number={4},
	pages={313--345},
	issn={0003-9527},
}

\bib{BrezisKato}{article}{
	author={Br\'{e}zis, Ha\"{\i}m},
	author={Kato, Tosio},
	title={Remarks on the Schr\"{o}dinger operator with singular complex
		potentials},
	journal={J. Math. Pures Appl. (9)},
	volume={58},
	date={1979},
	number={2},
	pages={137--151},
	issn={0021-7824},
}

\bib{BrezisLieb}{article}{
	author={Brezis, Ha\"{\i}m},
	author={Lieb, Elliott H.},
	title={Minimum action solutions of some vector field equations},
	journal={Comm. Math. Phys.},
	volume={96},
	date={1984},
	number={1},
	pages={97--113},
	issn={0010-3616},
}

\bib{Carlen}{article}{
	author={Carlen, Eric A.},
	title={Superadditivity of Fisher's information and logarithmic Sobolev
		inequalities},
	journal={J. Funct. Anal.},
	volume={101},
	date={1991},
	number={1},
	pages={194--211},
	issn={0022-1236},
	review={\MR{1132315}},
}

\bib{DelPinoJMPA}{article}{
	author={del Pino, Manuel},
	author={Dolbeault, Jean},
	title={Best constants for Gagliardo-Nirenberg inequalities and
		applications to nonlinear diffusions},
	journal={J. Math. Pures Appl. (9)},
	volume={81},
	date={2002},
	number={9},
	pages={847--875},
	issn={0021-7824},
}

\bib{DelPino}{article}{
	author={del Pino, Manuel},
	author={Dolbeault, Jean},
	title={The optimal Euclidean $L^p$-Sobolev logarithmic inequality},
	journal={J. Funct. Anal.},
	volume={197},
	date={2003},
	number={1},
	pages={151--161},
	issn={0022-1236},
}

\bib{Fibich}{article}{
	author={Fibich, Gadi},
	author={Ilan, Boaz},
	author={Papanicolaou, George},
	title={Self-focusing with fourth-order dispersion},
	journal={SIAM J. Appl. Math.},
	volume={62},
	date={2002},
	number={4},
	pages={1437--1462},
	issn={0036-1399},
}

\bib{Gazzola}{book}{
   author={Gazzola, Filippo},
   author={Grunau, Hans-Christoph},
   author={Sweers, Guido},
   title={Polyharmonic boundary value problems},
   series={Lecture Notes in Mathematics},
   volume={1991},
   note={Positivity preserving and nonlinear higher order elliptic equations
   in bounded domains},
   publisher={Springer-Verlag, Berlin},
   date={2010},
   pages={xviii+423},
   isbn={978-3-642-12244-6},
}

\bib{Gross}{article}{
	author={Gross, Leonard},
	title={Logarithmic Sobolev inequalities},
	journal={Amer. J. Math.},
	volume={97},
	date={1975},
	number={4},
	pages={1061--1083},
	issn={0002-9327},
}

\bib{LiebLoss}{book}{
	author={Lieb, Elliott H.},
	author={Loss, Michael},
	title={Analysis},
	series={Graduate Studies in Mathematics},
	volume={14},
	edition={2},
	publisher={American Mathematical Society, Providence, RI},
	date={2001},
	pages={xxii+346},
	isbn={0-8218-2783-9},
}

\bib{Lions1}{article}{
	author={Lions, P.-L.},
	title={The concentration-compactness principle in the calculus of
		variations. The locally compact case. II},
	journal={Ann. Inst. H. Poincar\'{e} Anal. Non Lin\'{e}aire},
	volume={1},
	date={1984},
	number={4},
	pages={223--283},
	issn={0294-1449},
}

\bib{Lions2}{article}{
	author={Lions, P.-L.},
	title={The concentration-compactness principle in the calculus of
		variations. The locally compact case. I},
	journal={Ann. Inst. H. Poincar\'{e} Anal. Non Lin\'{e}aire},
	volume={1},
	date={1984},
	number={2},
	pages={109--145},
	issn={0294-1449},
}

\bib{Mayboroda_Mazja}{article}{
   author={Mayboroda, Svitlana},
   author={Maz'ya, Vladimir},
   title={Regularity of solutions to the polyharmonic equation in general
   domains},
   journal={Invent. Math.},
   volume={196},
   date={2014},
   number={1},
   pages={1--68},
   issn={0020-9910},
}

\bib{Mazja}{book}{
   author={Maz'ja, Vladimir G.},
   title={Sobolev spaces},
   series={Springer Series in Soviet Mathematics},
   note={Translated from the Russian by T. O. Shaposhnikova},
   publisher={Springer-Verlag, Berlin},
   date={1985},
   pages={xix+486},
   isbn={3-540-13589-8},
}

\bib{MederskiNon2020}{article}{
	author={Mederski, Jaros\l aw},
	title={Nonradial solutions of nonlinear scalar field equations},
	journal={Nonlinearity},
	volume={33},
	date={2020},
	number={12},
	pages={6349--6380},
	issn={0951-7715},
}

\bib{Mederski}{article}{
	author={Mederski, Jaros\l aw},
	title={General class of optimal Sobolev inequalities and nonlinear scalar
		field equations},
	journal={J. Differential Equations},
	volume={281},
	date={2021},
	pages={411--441},
	issn={0022-0396},
}

\bib{Meleshko}{article}{
       author = {{Meleshko}, V.V.},
        title = {Selected topics in the history of the two-dimensional biharmonic problem},
      journal = {Appl. Mech. Rev.},
      volume={56},
      date={2003},
      pages={33-85},
}

\bib{Nirenberg}{article}{
   author={Nirenberg, Louis},
   title={Estimates and existence of solutions of elliptic equations},
   journal={Comm. Pure Appl. Math.},
   volume={9},
   date={1956},
   pages={509--529},
   issn={0010-3640},
}

\bib{PucciSerrin}{article}{
	author={Pucci, Patrizia},
	author={Serrin, James},
	title={A general variational identity},
	journal={Indiana Univ. Math. J.},
	volume={35},
	date={1986},
	number={3},
	pages={681--703},
	issn={0022-2518},
}

\bib{Selvadurai}{book}{
	author={Selvadurai, A. P. S.},
	title={Partial differential equations in mechanics. 2},
	note={The biharmonic equation, Poisson's equation},
	publisher={Springer-Verlag, Berlin},
	date={2000},
	pages={xviii+698},
	isbn={3-540-67284-2},
}

\bib{Van_der_Vorst}{article}{
   author={Van der Vorst, R. C. A. M.},
   title={Best constant for the embedding of the space $H^2\cap
   H^1_0(\Omega)$ into $L^{2N/(N-4)}(\Omega)$},
   journal={Differential Integral Equations},
   volume={6},
   date={1993},
   number={2},
   pages={259--276},
   issn={0893-4983},
}

\bib{Weissler} {article}{
	author={Weissler, F. B.},
	title={Logarithmic Sobolev inequalities for the heat-diffusion semigroup}, 
	journal={Trans. Am. Math. Soc.},
	volume={237},
	 date={1978},
	 pages={255--269},
}

	\end{biblist}
\end{bibdiv}

\end{document}